\newtheorem{Theorem}{Theorem}[section]
\newtheorem{Lemma}[Theorem]{Lemma}
\newtheorem{Corollary}[Theorem]{Corollary}
\newtheorem{Proposition}[Theorem]{Proposition}
\def\F{{\mathcal{F}}}
\def\J{{\mathcal{J}}}
\def\L{{\mathscr{L}}}
\def\B{{\mathscr{B}}}
\def\C{{\mathbb{C}}}
\def\D{{\mathbb{D}}}
\def\N{{\mathbb{N}}}
\providecommand{\dist}{\mathop{\rm dist}\nolimits}
\providecommand{\e}{\mathop{\rm e}\nolimits}
\providecommand{\Attr}{\mathop{\rm Attr}\nolimits}
\providecommand{\Par}{\mathop{\rm Par}\nolimits}
\newcommand{\eps}{\varepsilon}
\newcommand{\Oo}{\operatorname{O}}
\title[Landing of geometrically finite entire maps]{A landing theorem for dynamic rays of geometrically finite entire functions} 
\author{Helena Mihaljevi\'{c}-Brandt}
\begin{document}
\maketitle

\begin{abstract}
 A transcendental entire function $f$ is called \emph{geometrically finite}
  if the intersection of the set $S(f)$
  of singular values with the Fatou set $\F(f)$ is compact
  and the intersection of the postsingular set $P(f)$ with
  the Julia set $\J(f)$ is finite. (In particular,
  this includes all entire functions with finite postsingular set.) 
  If $f$ is geometrically finite, then $\F(f)$ is either empty or consists
  of the basins of attraction of finitely many attracting or parabolic cycles.

 Let $z_0$ be a repelling or parabolic periodic point of such a map $f$. We
  show that, if $f$ has finite order, then
  there exists an injective curve consisting of escaping points of
  $f$ that connects $z_0$ to $\infty$. (This curve is called
  a \emph{dynamic ray}.) In fact, the assumption of finite order can be
  weakened considerably; for example, it is sufficient to assume that
  $f$ can be written as a finite composition of finite-order functions. 
\end{abstract}

\section{Introduction}

In polynomial dynamics, \emph{dynamic rays}, which foliate 
the set of escaping points (points which tend to $\infty$ under iteration), 
were introduced by Douady and Hubbard as a tool in their famous work on the Mandelbrot set \cite{douady}. Since then, dynamic rays, and their landing properties in particular, 
have been an essential ingredient in the success of polynomial dynamics. 
One of the  fundamental results in this area, which goes back to Douady, 
states that each repelling or parabolic (pre)periodic point of a polynomial 
with connected Julia set is the landing point of at least one (pre)periodic 
dynamic ray \cite[Theorem 18.11]{milnor}. 

In the polynomial case, dynamic rays arise naturally as preimages of 
 straight rays under the B\"ottcher isomorphism at $\infty$. When $f$
 is a transcendental entire function, $\infty$ is no longer a superattracting
 fixed point, but rather an essential singularity. Hence the \emph{escaping set}
   \[ I(f) := \{z\in\C: f^n(z)\to\infty\} \]
 is no longer open. 
 Nevertheless, it has long been known \cite{devaney,devaney2} 
 that for certain classes of entire functions there exist curves in the 
escaping set which can be seen as analogs of dynamic rays. We shall refer to these
 curves (which are also often known as \emph{hairs}) themselves as
 dynamic rays of $f$ to stress the analogy to the polynomial case. (See
 Definition \ref{defn_ray} for a formal definition of 
 dynamic rays, and also below for the special case of \emph{periodic} rays
 that is of main interest to us.)

Recently Rottenfu\ss er, R\"{u}ckert, Rempe and Schleicher \cite{rrrs} 
 proved that, for an entire function that is a composition of 
 finite-order entire functions with 
 bounded sets of singular values, the escaping set
 consists of dynamic rays and (some of) their endpoints. 
 This provides us with a large class of functions where we can study 
 the topology of Julia sets by looking at landing properties of dynamic rays. 
 This approach to the study of Julia sets and escaping sets has been used with 
 great success in certain families of entire transcendental maps like the 
 exponential family $E_{\lambda}(z)=\lambda e^{z}$ or the cosine family 
 $F_{a,b}(z)=a e^z + b e^{-z}$. 

To state the main question of interest to us, let us begin by 
 formally defining what we mean by a \emph{periodic (dynamic) ray}.

\begin{Definition}[(Periodic rays)] \label{defn_periodic_rays}
  Let $f$ be an entire function. Then a \emph{periodic ray} of $f$
   is an injective curve
    \[ g:(0,\infty)\to\C \]
   such that $\lim_{t\to \infty}g(t)=\infty$ and such that
   there is an $n\geq 1$ with
   $g(2t)=f^n(g(t))$ for all $t$. The minimal such $n$ is called
   the \emph{period} of $g$. 

  As usual, we say that $g$ \emph{lands} at $z_0$
   if $\lim_{t\to 0} g(t)=z_0$. 
\end{Definition}
 
A natural question suggested by Douady's theorem and the results
 of \cite{rrrs} is the following. Suppose that $f$ is a finite-order
 entire function whose singular set is bounded, and suppose also that 
 $S(f)\cap I(f)=\emptyset$. Is 
 every repelling or parabolic periodic point of $f$ the landing
 point of a periodic ray of $f$?

Even for exponential and cosine maps, this question is still open.
 (For a partial result on exponential maps, compare  \cite{rempe2}.)
 However, Schleicher and Zimmer \cite{schleicher1}
 obtained a positive answer for exponential maps satisfying certain
 dynamical assumptions. 
 In this paper, we generalize this statement, under similar conditions,
 to a much larger class of entire functions. 

 For an entire transcendental function $f$, $S(f)$ denotes the set of
  singular values, $P(f)$ the postsingular set, and $\F(f)$ and $\J(f)$
  the Fatou and Julia sets of $f$, respectively. (For definitions, see
  Section \ref{notations_section}.) 

\begin{Definition}
\label{def_gf}
A map $f$ is called \emph{geometrically finite} if  $S(f)\cap\F(f)$ is compact
 and $P(f)\cap\J(f)$ is finite.
\end{Definition}

We can now state our main result. Recall that $f$ has \emph{finite order} if 
 $\log\log|f(z)|=O(\log|z|)$ as $z\to\infty$.
\begin{Theorem}
\label{thm1}
Let $f$ be geometrically finite and assume that $f$ has finite order.
Then, for any repelling or parabolic periodic point $z$ of $f$, 
 there is a periodic ray landing at $z$.
\end{Theorem}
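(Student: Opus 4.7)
The plan is to adapt Douady's classical pull-back argument for polynomial periodic rays (compare \cite[Theorem 18.11]{milnor}) to the transcendental entire setting. The key ingredients are: (a) the existence of a rich family of dynamic rays, provided by \cite{rrrs}; (b) an expanding orbifold metric near $\J(f)$, which exists thanks to geometric finiteness; and (c) a local inverse branch of $f^n$ fixing $z_0$. By passing to the iterate $f^n$, where $n$ is the period of $z_0$, I may assume $z_0$ is a fixed point of $f$; the correct period will be recovered from uniqueness of the limit curve.

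First I would verify that $f$ satisfies the hypotheses of \cite{rrrs}. Since $f$ is geometrically finite, $S(f) \cap \F(f)$ is compact by assumption and $S(f)\cap\J(f)\subseteq P(f)\cap\J(f)$ is finite, so $S(f)$ is bounded. Combined with the finite-order hypothesis, this places $f$ in the framework of \cite{rrrs}, giving that every escaping point lies on a dynamic ray or is its endpoint, and in particular producing many periodic rays indexed by external addresses. Second, because $P(f)\cap\J(f)$ is finite, I would construct a hyperbolic orbifold metric $\mu$ on a neighborhood of $\J(f)\setminus P(f)$, with respect to which $f$ is an expansion on the Julia set (with a weaker but still controlled expansion in the finitely many parabolic local charts).

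The core of the proof is the pull-back step. Let $\phi$ be the local inverse branch of $f$ at $z_0$ with $\phi(z_0)=z_0$, defined on a small disk $U$; in the parabolic case, restrict $\phi$ to an attracting petal for $\phi$ (i.e.\ a repelling petal for $f$) contained in $U$. Choose a periodic ray $R_0$ of $f$ that accumulates at some point of $U$; such a ray exists by a density/combinatorial argument on the external-address space supplied by \cite{rrrs}. Form the iterated pull-backs $R_k = \phi^k(V)$, where $V\subseteq R_0$ is a suitable initial segment. The $\mu$-expansion of $f$ forces the $\mu$-diameters of $R_k$ to shrink to $0$, so the $R_k$ converge in the Hausdorff sense to a curve $R_\infty$. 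A final verification step shows that $R_\infty$ is injective, may be parameterized in the sense of Definition~\ref{defn_periodic_rays}, tends to $\infty$ at one end, and lands at $z_0$ at the other; together with $f$-invariance of the construction this yields the desired periodic ray.

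The main obstacle is arranging that $R_0$ can actually be pulled back \emph{indefinitely}: each application of $\phi$ must avoid the singular values, so the relevant segment of $R_0$ together with all its iterated pull-backs must stay clear of $P(f)$ outside a small neighborhood of $z_0$. This is where geometric finiteness enters decisively, via the compactness of $S(f)\cap\F(f)$ and the finiteness of $P(f)\cap\J(f)$; one likely has to choose the initial ray $R_0$ carefully and possibly trim it further. A second delicate point is the parabolic case, where $\phi$ is not a uniform $\mu$-contraction at the fixed point; here one must work inside a chosen petal and use the standard parabolic normal-form estimates to guarantee that pull-backs shrink to a single point rather than spiralling in towards $z_0$ without landing.
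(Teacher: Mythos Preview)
Your pull-back step, as written, does not produce a ray. You take a \emph{bounded} initial segment $V\subset R_0$ and apply the local contraction $\phi$ repeatedly; then $\phi^k(V)$ shrinks to the single point $z_0$, and the Hausdorff limit is $\{z_0\}$, not a curve to $\infty$. If instead you mean to pull back an unbounded curve reaching $\infty$, then the statement that ``$\mu$-diameters of $R_k$ shrink to $0$'' is false, and you must say in what sense the unbounded curves $\phi^k(V)$ converge and why the limit is an injective ray. Either reading leaves the core construction undefined.

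The deeper gap is combinatorial. The problem is not to show that \emph{some} sequence of pull-backs clusters near $z_0$; it is to identify \emph{which} periodic external address $\underline{s}$ has the property that the ray $g_{\underline{s}}$ lands at $z_0$. Starting from an arbitrary periodic ray $R_0$ ``accumulating at some point of $U$'' gives you no handle on this: pulling back $R_0$ by the branch fixing $z_0$ generally changes its address, and there is no reason the addresses should cycle through a finite set unless you prove it. The paper's argument is organised around exactly this point. One pulls back an arbitrary curve from $z_0$ to $\infty$ (a ``leg'') in a carefully chosen hyperbolic domain $U\subset\C\setminus(P(f)\cup\{z_0\})$, and the main technical theorem is that the \emph{homotopy classes} of these legs in $U$ take only finitely many values under iteration; this is obtained by showing that the hyperbolic length of the geodesic representative, restricted to a suitable compact core $Y\subset U$, satisfies a uniform contraction inequality $\ell_{n+1}\le \eta\,\ell_n + C$ with $\eta<1$. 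Periodicity of the homotopy class then forces a periodic external address $\underline{s}$, and only \emph{after} $\underline{s}$ is known does one take the existing periodic ray $g_{\underline{s}}$ (supplied by \cite{rrrs}) and show, by splicing its tail onto the leg and using hyperbolic contraction once more, that $g_{\underline{s}}$ lands at $z_0$. Your sketch contains neither the homotopy-finiteness step nor any substitute for it; the orbifold-metric expansion you propose is the right kind of ingredient, but it has to be fed into a length-control argument on a compact core, not into a Hausdorff-limit argument on shrinking segments.
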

\begin{Remark*}
In fact, the condition that $f$ is geometrically finite can be weakened;
what we need is that every iterate of $f$ 
has an ``admissible expansion domain'' at every 
repelling or parabolic fixed point (see Definition \ref{dfn_adm_exp_dom}).

 Furthermore, the assumption of finite order is not essential; 
we require that ``all periodic rays exist''. 
(See Theorem \ref{maintheorem} for the precise statement.) 
By \cite{rrrs}, this condition is satisfied
by all finite compositions of finite-order functions with bounded sets
of singular values. 
\end{Remark*}

Our result implies in particular that each singular value in the Julia set of 
 a map $f$ to which the theorem applies 
 is the landing point of some periodic dynamic ray. 
 Using these rays, one can 
 define a dynamically natural partition of the Julia set, as done
 for exponential and cosine maps in \cite{schleicher1,schleicher2}, which
 is useful for studying the 
 topological dynamics of $f$ in combinatorial terms.

We remark that, under more restrictive function-theoretic conditions,
 our result can be considerably strengthened. Indeed, if
 $f=F_{a,b}$ is a cosine map for which both critical values are
 strictly preperiodic, then Schleicher showed in
 \cite{schleicher2} that \emph{every} point $z\in\C$ is either on a 
 dynamic ray or the landing point of a dynamic ray. In \cite{mb} we 
 generalize this result to any finite-order \emph{subhyperbolic} function $f$
 such that $\J(f)$ contains no asymptotic values and the
 degree of critical points in $\J(f)$ is uniformly bounded.
 (A subhyperbolic entire function is a geometrically finite map without
  parabolic cycles.) Furthermore, the Julia set of such a function
  can be described as a ``pinched Cantor bouquet'' in the sense of
  \cite[p.24]{rempe3} (see also \cite[Definition 1.2]{ao}). 

 Here the assumption that there are no asymptotic values in the Julia set
  is essential: indeed, there is \emph{no} exponential map
  $f=E_{\lambda}$ for which
  the asymptotic value belongs to the Julia set and every point of
  $\J(f)$ is either on a dynamic ray or the landing point 
of a dynamic ray \cite{rempe4}.

\subsection*{Idea of the proof}
 In the case of geometrically finite exponential maps, our theorem is due to
  Schleicher and Zimmer \cite{schleicher1} (although in the case where 
  the singular value is preperiodic some of the details are only sketched).
 This was extended to cosine maps with preperiodic critical values in 
  \cite{schleicher2}.  
  The general strategy of our proof, which we will now describe,
  follows the same idea as these papers. 

By passing to a suitable iterate, we can assume that the considered repelling 
 or parabolic periodic point $z_0$ is a fixed point. The idea is to start
  with \emph{any} given curve connecting $z_0$ to infinity, and pull back this 
  curve using iterates of the map $f$. Using hyperbolic contraction arguments, 
  we prove 
  that this procedure yields only finitely many different curves up to 
  homotopy. This then allows us to associate a combinatorial object
  (a ``periodic external address'') to these curves. When we know---e.g. from the existence theorems of \cite{rrrs}---that there exists a periodic ray
  corresponding to this address, it
  then easily follows that this ray lands at $z_0$.

Since the ``ad-hoc'' method that was used 
 to obtain hyperbolic contraction estimates in
 \cite{schleicher1,schleicher2} appears
 to be difficult to adapt to our more general setting, 
 we develop a rather natural construction using hyperbolic 
 geometry. 
 Our argument gives a more streamlined proof even in the established cases. 

We would like to emphasize that the case
when $P(f)$ is finite requires less technical constructions than the general setting. 
This is why we consider this special case separately in the proofs of some results required
for the proof of Theorem \ref{thm1} (see Section \ref{sec_3}). 
 
\begin{acknowledgements}\label{ackref}
I would like to thank especially my supervisor, Lasse Rempe, for his great help and support. 
I would also like to thank Adam Epstein, Freddie Exall and Mary Rees 
for helpful and interesting discussions.
\end{acknowledgements}

\section{Preliminaries}

The complex plane, Riemann sphere and the punctured plane are denoted by 
 $\C$, $\widehat{\C}:=\C\cup\lbrace\infty\rbrace$ and $\C^{*}:=\C\backslash\lbrace 0\rbrace$, respectively. 
 We write $\D$ for the unit disk and  $S^{1}:=\partial\D$ for the unit circle;
 $\D^{*}:=\D\setminus\{0\}$ is the punctured disk. 
 The Euclidean distance between two sets $A, B\subset\C$ 
 is denoted by $\dist(A,B)$. The closure $\overline{A}$ and the
 boundary $\partial A$ of a set $A\subset\C$
 is always understood to be taken relative to the complex plane.

\subsection{Background on holomorphic dynamics} 
\label{notations_section}

Let $f:\C\rightarrow\C$ be an entire transcendental function. Then 
 the set $S(f)$ of \emph{singular values} of $f$ equals the closure 
 of the set of all finite critical and asymptotic values of $f$.
  If $G$ is an open set with $G\cap S(f)=\emptyset$, then the map 
  $f: f^{-1}(G)\rightarrow G$ is a 
  covering map. Denote by 
  $f^n$ the $n$-th iterate of $f$. Then the \emph{postsingular set} of $f$ is 
  defined by  $P(f)=\overline{\cup_{n\geq 0} f^{n}(S(f))}$.

\begin{Lemma}
\label{lem_U_hyp}
Let $f$ be an entire transcendental function. Then $\vert P(f)\vert\geq 2$.
\end{Lemma}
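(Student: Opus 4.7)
The plan is to rule out the two cases $|P(f)|=0$ and $|P(f)|=1$ in turn, using the fact that each $f^n(S(f))$ is contained in $P(f)$, together with the classification of covers of $\C$ and of $\C^*$.

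First I would dispose of the case $P(f)=\emptyset$. Since $P(f)\supseteq S(f)$, this forces $S(f)=\emptyset$, and then $f\colon\C\to\C$ is an unramified covering map. Because $\C$ is simply connected, every connected covering of $\C$ is a homeomorphism, so $f$ would be affine, contradicting that $f$ is transcendental.

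Next I would show $|P(f)|=1$ is impossible. Suppose $P(f)=\{a\}$. Since $S(f)\subseteq P(f)$ and every iterate image of $S(f)$ lies in $P(f)$, we must have $S(f)=\{a\}$ and $f(a)=a$. After replacing $f$ by $z\mapsto f(z+a)-a$ we may assume $a=0$, so that $f$ is transcendental entire with $S(f)=\{0\}$ and $f(0)=0$. Then the restriction
\[
f\colon \C\setminus f^{-1}(0)\;\longrightarrow\;\C^{*}
\]
is a covering map. The domain $U:=\C\setminus f^{-1}(0)$ is the plane minus a discrete set, hence is connected, so $U$ is a connected cover of $\C^{*}$. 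Since $\pi_1(\C^{*})=\Z$, up to biholomorphism $U$ is either $\C$ (the universal cover) or $\C^{*}$ (via a finite cover $z\mapsto z^{n}$).

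In the universal-cover case, $U\cong\C$ forces $f^{-1}(0)=\emptyset$, contradicting $f(0)=0$. In the remaining case $U\cong\C^{*}$, the set $f^{-1}(0)$ consists of a single point $z_0$, and $f\colon\C\setminus\{z_0\}\to\C^{*}$ is a finite-degree covering between two copies of $\C^{*}$; such covers are classified as $z\mapsto c(z-z_0)^{n}$ for some $n\ge 1$, which extends to a polynomial, contradicting that $f$ is transcendental. Combined with step one, this yields $|P(f)|\ge 2$.

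The main obstacle, such as it is, is the careful identification of $U=\C\setminus f^{-1}(0)$ as a covering space of $\C^{*}$ and the invocation of the classification of connected coverings of $\C^{*}$; once that is in place, ruling out each topological type reduces to elementary considerations. No delicate analytic estimate is needed.
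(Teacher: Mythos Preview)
Your proof is correct. Both your argument and the paper's rest on the covering-space classification over $\C^{*}$, but the organization differs. The paper reduces to the case $|S(f)|=1$, invokes the standard fact that then $f(z)=\exp(az+b)$, and observes that the asymptotic value $0$ is omitted, so $0$ and $f(0)=e^{b}\neq 0$ both lie in $P(f)$. You instead assume $|P(f)|=1$ directly, deduce that the unique singular value must be fixed by $f$, and then run the covering classification of $\C^{*}$ to force $f$ to be a polynomial, a contradiction. Your route is slightly more self-contained (it unpacks the covering argument rather than citing the $\exp$ normal form) and explicitly disposes of the case $S(f)=\emptyset$, which the paper leaves implicit; the paper's route is shorter once the normal form is granted. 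One minor imprecision: finite holomorphic covers $\C^{*}\to\C^{*}$ are $w\mapsto cw^{\pm n}$, not just $w\mapsto cw^{n}$, but the negative-exponent case is excluded here since $f$ is entire at $z_0$, so your conclusion stands.
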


\begin{proof}
Assume first that $f$ has only one singular value, at $w$ say, 
since otherwise the claim already follows. For simplicity, assume $w=0$. 
A  well-known result that follows from covering space theory says that $f$ 
is of the form $f(z)=\exp(az+b)$ for some $a\in\C^{*}, b\in\C$. 
In this case the asymptotic value $w=0$ is also an omitted value, 
so $P(f)$ contains at least two points.
\end{proof}

The \emph{Fatou set} $\F(f)$ of $f$ is the set of all points $z\in\C$ 
so that $(f^n)_{n\in\N}$ forms a normal family in the sense of Montel 
in a neighbourhood of $z$. Its complement $\J(f):=\C\backslash\F(f)$ is called the 
 \emph{Julia set} of $f$. Recall from the introduction that the
 \emph{escaping set} $I(f)$ is the set of 
 all those points $z\in\C$ such that $f^n(z)\rightarrow\infty$ when 
 $n\rightarrow\infty$. We note that $I(f^n)=I(f)$ for all $n\geq 1$. 

To avoid confusion, we call a point $z\in\C$ \emph{preperiodic} 
if some image $f^n(z)$, $n\geq 1$, of $z$ is periodic but not $z$ itself. 
We call a point \emph{eventually periodic} if it is either 
preperiodic or periodic. Let $z$ be a periodic point of $f$ of 
(minimal) period $n$. We call $\mu(z):=(f^n)^{'}(z)$ the 
\emph{multiplier} of $z$. A periodic point $z$ is called 
\emph{attracting} if $0\leq \vert \mu(z)\vert<1$, 
\emph{indifferent} if $\vert \mu(z)\vert=1$ and 
\emph{repelling} if $\vert \mu(z)\vert>1$. 
An attracting periodic point $z$ is called 
\emph{superattracting} if $\mu(z)=0$. We will 
denote the union of all attracting periodic points of $f$ by $\Attr(f)$. 
Since the multiplier of an indifferent periodic point 
is of the from $\e^{2\pi i t}$ with $0\leq t<1$, 
we can distinguish between \emph{rationally} and 
\emph{irrationally indifferent} periodic points, according to 
whether $t$ is rational or not. A rationally indifferent 
periodic point is also called \emph{parabolic}. 
We denote the union of all parabolic cycles of $f$ by $\Par(f)$. 
An irrationally indifferent periodic point in the Julia set is 
called a \emph{Cremer point}. 
The set of all points whose orbits converge to an attracting periodic cycle
 is called the \emph{attracting basin} of this cycle. Likewise, the set of
 points whose orbits converge nontrivially to a parabolic cycle is called the
 \emph{parabolic basin} of that cycle.

Every component of $\F(f)$ is either an \emph{eventually periodic domain} or
a \emph{wandering domain}. The only possible periodic Fatou domains are 
\emph{immediate attracting basins}, \emph{immediate parabolic basins}, 
\emph{Siegel disks} and \emph{Baker domains} (for a detailed explanation 
see \cite[Theorem $6$]{bergweiler1}). If $f$ belongs to the 
\emph{Eremenko-Lyubich class}
\[ \B:=\lbrace f \textrm{ entire transcendental}: 
S(f)\textrm{ is bounded}\rbrace, \]
 then $f$ has no Baker domains, each component of $\F(f)$ is simply-connected 
 and $I(f)\subset J(f)$ (\cite[Proposition 3, Theorem 1]{eremenko1}).

For further background on holomorphic dynamics we refer the reader to \cite{milnor} and \cite{bergweiler1}.

\subsection{Background on hyperbolic geometry}
 A domain (i.e., open connected set) $U\subset\C$ is called \emph{hyperbolic}
  if $\C\setminus U$ contains at least two points. We denote the density
  of the hyperbolic metric on $U$ (i.e., the unique complete conformal
  metric of constant curvature $-1$) by $\rho_{U}(z)$. 
  To each curve $\gamma:(a,b)\rightarrow U$ we assign the 
   \emph{hyperbolic length} 
    $\ell_{U}(\gamma):=\int_{\gamma} \rho_{U}(z)\vert dz\vert$ of 
    $\gamma$. 
 For any two points $z,w\in U$ the 
 \emph{hyperbolic distance} $d_{U}(z,w)$ 
 is the smallest hyperbolic length of a curve connecting 
  $z$ and $w$ in $U$.

 Pick's theorem \cite[Theorem 2.11]{milnor} states that, any holomorphic map $f:V\to U$ 
  between two hyperbolic
  domains does not increase the respective hyperbolic metrics. In fact,
  it is a local isometry if and only if $f$ is a covering map; otherwise 
  $f$ is a strict contraction. 

 In particular, if $V\subsetneq U$, then $\rho_V(z) > \rho_U(z)$ for all
  $z\in V$. 

We will use the following standard estimates on the hyperbolic metric of a
 hyperbolic
 domain $U\subset\C$ 
 \cite[Corollary $A.8$]{milnor}:
\begin{eqnarray}
\label{hyp_est}
\frac{1}{2\cdot\dist(z,\partial U)}\leq\rho_U (z)\leq\frac{2}{\dist(z,\partial U)},
\end{eqnarray}
where the inequality on the left-hand side only holds if $U$ is simply-connected.

By the Uniformization Theorem \cite[Theorem 2.1]{milnor}, any hyperbolic
 domain $U$ is conformally isomorphic to a quotient of the form 
 $\D/\Gamma$, where $\Gamma$ is a Fuchsian group acting on $\D$. 
 Let $\pi:\D\rightarrow U$ be a universal covering map. 

 A \emph{geodesic} on $U$ is the image of a geodesic in $\D$, i.e.\
  an arc of a circle orthogonal to $S^1$. If $g$ is a geodesic connecting 
  two points $z,w\in U$, then $g$ has minimal hyperbolic length among
  curves connecting $z$ and $w$ in the same homotopy class (understood, as usual,
  relative $\partial U$). 
 It will often be important to know that we can replace any curve by 
  a geodesic in the same homotopy class.

\begin{Proposition}[{\cite[Lemma 3]{royden}}]
\label{roy}
 Suppose that $\gamma:[0,\infty]\to\overline{U}$ is a curve with 
  $\gamma\bigl((0,\infty)\bigr)\subset U$. Then there exists a unique geodesic
  $g$ of $U$ that is homotopic to $\gamma$. 
\end{Proposition}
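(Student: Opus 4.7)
The strategy is to reduce the problem to finding a hyperbolic geodesic in the disk via the universal cover. Fix a holomorphic universal covering $\pi\colon\D\to U$ and a continuous lift $\tilde\gamma\colon(0,\infty)\to\D$ of $\gamma|_{(0,\infty)}$. The entire result will follow once the limits $p_{0}:=\lim_{t\to 0}\tilde\gamma(t)$ and $p_{\infty}:=\lim_{t\to\infty}\tilde\gamma(t)$ are shown to exist in $\overline{\D}$: the unique hyperbolic geodesic $\tilde g$ of $\D$ joining $p_{0}$ to $p_{\infty}$ then projects to the desired geodesic $g:=\pi\circ\tilde g$ of $U$, with uniqueness inherited from $\D$.

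The existence of $p_{0}$ (and analogously $p_{\infty}$) splits into two cases, the second of which carries the essential difficulty. When $\gamma(0)\in U$, standard path-lifting for $\pi$ forces $\tilde\gamma(t)$ to converge in $\D$ to a specific preimage of $\gamma(0)$. When $\gamma(0)\in\partial U$ (allowing $\gamma(0)=\infty$, treated via spherical neighbourhoods), the convergence $\gamma(t)\to\gamma(0)$ yields $\dist(\gamma(t),\partial U)\to 0$, so the estimate~\eqref{hyp_est} forces $\rho_{\D}(\tilde\gamma(t))=\rho_{U}(\gamma(t))\to\infty$ and hence $|\tilde\gamma(t)|\to 1$. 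Upgrading this to convergence to a single point of $S^{1}$ is where the real work lies. For each sufficiently small open disk $B_{n}$ around $\gamma(0)$ the tail $\gamma((0,\delta_{n}))$ lies in a single connected component $V_{n}$ of $B_{n}\cap U$, so by path-lifting $\tilde\gamma((0,\delta_{n}))$ lies in a single connected component $\tilde V_{n}$ of $\pi^{-1}(V_{n})$. The sets $\tilde V_{n}$ are nested, and a Koebe-type distortion estimate (or direct analysis of the parabolic cusp in the cover when $\gamma(0)$ is isolated in $\partial U$) shows that $\bigcap_{n}\overline{\tilde V_{n}}$ meets $S^{1}$ in exactly one point, which is $p_{0}$. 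This is the step where one must bifurcate between accessible boundary points, isolated punctures, and the special case $\gamma(0)=\infty$, and it constitutes the main obstacle of the proof.

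With $p_{0}$ and $p_{\infty}$ in hand, the geodesic $\tilde g$ of $\D$ joining them is uniquely determined (the degenerate case $p_{0}=p_{\infty}$ collapses $\tilde g$ to a constant curve, which is the correct outcome). The canonical hyperbolic homotopy in $\D$ between $\tilde\gamma$ and $\tilde g$, moving points of equal parameter along the geodesic arc joining them while keeping endpoints fixed at $p_{0}, p_{\infty}$, projects by $\pi$ to a homotopy from $\gamma$ to $g$ in $\overline{U}$ relative to $\partial U$. For uniqueness, any other geodesic of $U$ homotopic to $\gamma$ lifts compatibly to a geodesic of $\D$ whose endpoints on $\overline{\D}$ must again be $p_{0}$ and $p_{\infty}$, so by the uniqueness of hyperbolic geodesics in $\D$ between prescribed boundary data it coincides with $\tilde g$ and therefore projects to $g$.
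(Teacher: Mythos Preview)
The paper does not give its own proof of this proposition; it simply cites it from Royden \cite[Lemma~3]{royden}. So there is no argument in the paper to compare against, and your sketch is in fact supplying what the paper omits.

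Your overall strategy---lift to the universal cover $\pi\colon\D\to U$, show the lift $\tilde\gamma$ has well-defined endpoints $p_0,p_\infty\in\overline\D$, take the unique $\D$-geodesic between them, and project---is the standard and correct route, and you have correctly singled out the existence of $p_0$ (when $\gamma(0)\in\partial U$) as the place where the genuine work lies. Two remarks on the execution. First, the assertion that $\bigcap_n\overline{\tilde V_n}\cap S^1$ is a single point is exactly the content of the accessible-boundary-point/prime-end correspondence, and your appeal to ``a Koebe-type distortion estimate'' is vague here; in Royden's setting this is handled via the theory of boundary correspondence for simply connected covers, and for the applications in this paper (punctures and finitely many accessible boundary components) the parabolic-cusp description you allude to is what actually does the job. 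Second, your treatment of the degenerate case $p_0=p_\infty$ is not quite right: a constant curve is not a geodesic, and indeed if $\gamma$ is, say, a loop encircling a single puncture once, there is \emph{no} geodesic of $U$ homotopic to it rel endpoints. The proposition as stated (and as used in the paper, where always $\gamma(0)\ne\gamma(\infty)$) should be read with the implicit non-degeneracy that $p_0\ne p_\infty$; you should flag this hypothesis rather than claim the constant curve is ``the correct outcome''.
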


Suppose that $w$ is an isolated point of $\partial U$; such a point is  called
 a \emph{puncture}. By \cite[Proposition 3.8.9]{hubbard}, there exists a
 covering map $p:\D^{*}\rightarrow U$ such that
 $p$ extends to a continuous map $\D\to \C$, sending $0$ to $w$, and such
 that, for sufficiently small
 $\eps>0$, the restriction
 $p:\overline{\D_{\eps}(0)}\setminus\{0\}\to U$ is one-to-one.
 If $\eps$ has this property, then the simple closed curve 
  $h_{\eps}(w) := p(\partial\D_{\eps}(0))$ is called a 
  \emph{horocycle at $w$}; the component $H_{\eps}(w)$ of $U\setminus h_{\eps}(w)$
  whose boundary is $h_{\eps}(w)\cup\{w\}$ is called a \emph{horosphere at $w$}.
 
The following result states that geodesics on hyperbolic 
domains will stay away from the punctures.

\begin{Lemma}[{\cite[Lemma 2]{jorgensen}}]
\label{jorg}
Let $U$ be a hyperbolic domain and let $w$ be a puncture of $U$. 
 There exists $\eps>0$ such that each simple geodesic entering
 the horosphere $H_{\eps}(w)$ ends at the point $w$. 
\end{Lemma}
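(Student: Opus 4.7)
The plan is to pass to the universal cover. By the Uniformization Theorem I would write $\pi\colon\H\to U$ with $U=\H/\Gamma$, and, since $w$ is a puncture, conjugate so that the $\Gamma$-stabilizer of $w$ is generated by the parabolic element $T(z)=z+1$ fixing $\infty$. Under this normalization, a standard cusp uniformization identifies $\D^{*}$ with $\H/\langle T\rangle$ via $z\mapsto\exp(2\pi iz)$, so Hubbard's puncture covering $p\colon\D^{*}\to U$ factors through $\pi$. Consequently the horocycle $h_{\eps}(w)$ lifts to a horizontal line $\{\operatorname{Im}z=C(\eps)\}$ with $C(\eps)\to\infty$ as $\eps\to 0$, and $H_{\eps}(w)$ lifts to the half-plane $\{\operatorname{Im}z>C(\eps)\}$.

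Next, I would lift the geodesic. Let $g$ be a simple geodesic in $U$ entering $H_{\eps}(w)$ and choose a maximal lift $\tilde g\subset\H$. Since every geodesic of $\H$ is either a vertical ray (ending at $\infty$) or a Euclidean semicircle orthogonal to $\mathbb{R}$, it suffices to exclude the semicircle case for small $\eps$; the vertical-ray case immediately gives that $g$ has $w$ as an endpoint, which under $\pi$ is precisely $w$.

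The crux of the proof is to exploit simplicity of $g$. Distinct $\pi$-lifts of $g$ must be disjoint in $\H$: any intersection point would project to a self-intersection of $g$ (two parameter values with the same image). In particular $\tilde g$ and its translate $T\tilde g$ are disjoint, and $T\tilde g\ne\tilde g$ because a semicircle is bounded in the horizontal direction. A short Euclidean computation shows that two semicircles of common radius $r$ and centres at Euclidean distance $1$ are disjoint only if $r\le 1/2$; so, if $\tilde g$ is semicircular with endpoints $a<b$ on $\mathbb{R}$, then $(b-a)/2\le 1/2$ and $\tilde g$ attains maximal imaginary part at most $1/2$. Choosing $\eps$ small enough that $C(\eps)>1/2$ (e.g.\ $\eps<e^{-\pi}$) then rules out the semicircular case entirely, and the desired conclusion follows.

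I expect the main obstacle to be not the geometric heart of the argument, which is elementary, but the routine bookkeeping needed to identify the objects. Concretely one must verify carefully that Hubbard's distinguished horocycles $h_{\eps}(w)$ from the statement preceding the lemma correspond under the universal covering to horizontal lines in $\H$, and track how $C(\eps)$ depends on $\eps$ so that ``small enough $\eps$'' can indeed be chosen. Once this dictionary between the puncture-disc and the parabolic-cusp pictures is in place, the remainder is immediate from hyperbolic/Euclidean geometry.
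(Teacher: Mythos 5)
Your argument is correct, and in fact it reproduces the standard proof of this fact. The paper itself supplies no proof here, quoting Jorgensen's Lemma~2 directly, and the argument you give — normalize the cusp so the parabolic generator is $T(z)=z+1$, observe that a lift of a simple complete geodesic into the horoball $\{\operatorname{Im} z > C\}$ must be disjoint from its $T$-translate (else $g$ self-intersects downstairs), and conclude that any semicircular lift has radius $\le 1/2$ and hence cannot reach height $C>1/2$ — is precisely Jorgensen's. Two small points to tighten in a polished write-up: first, make explicit that $T\in\Gamma$, i.e.\ that the cusp stabilizer is a (maximal parabolic) subgroup of $\Gamma$ generated by $T$ after conjugation; second, when you invoke disjointness of distinct lifts, it is cleanest to say that for a \emph{complete} simple geodesic the full preimage $\pi^{-1}(g)$ is a $\Gamma$-invariant union of pairwise disjoint complete geodesics, since two distinct complete geodesics in $\H$ that meet must cross transversally, forcing a genuine self-crossing of $g$. (In the present paper geodesic legs are indeed complete, with both ends on $\partial U$, so this applies directly.) The quantitative threshold $\eps<e^{-\pi}$ is correct under the normalization $p=\exp(2\pi i\,\cdot)$ on $\H/\langle T\rangle\cong\D^{*}$, but since Hubbard's covering map is only specified up to rotation of $\D^{*}$, it is safer to state the conclusion simply as ``for all sufficiently small $\eps$.''
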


\subsection{Tracts and external addresses}
\label{subs_tracts}
Next, we want to review the constructions of tracts and external addresses for 
a function $f$ with bounded postsingular set, following \cite{rott,rrrs}. 

Let $D$ be any Jordan domain containing $P(f)$ and define 
$A:=\C\backslash\overline{D}$ and $G:= f^{-1}(A)$. Then each 
component $T$ of $G$ is a simply-connected unbounded domain 
whose boundary $\partial T$ is a Jordan arc tending to 
 $\infty$ at both ends. Such a component  $T$ is called a 
\emph{tract} of the function $f$; the restriction 
$f: T\rightarrow A$ is a universal covering
\cite[Theorem 1.1]{devaney}. There can be only finitely many 
tracts having non-empty intersection with $D$. 

Next, we choose a curve $\alpha\subset A$ not intersecting any 
tract $T$, such that $\alpha$ connects $\partial D$ to $\infty$. 
The preimage $f^{-1}(\alpha)$ induces a partition of each tract, 
cutting it into countably many components called \emph{fundamental domains}. 
Every such domain is mapped conformally to $A\backslash{\alpha}$ under $f$.

\begin{figure}
\centering
\psfrag{F}{$F$}
\psfrag{f}{$f$}
\psfrag{A}{$A$}
\psfrag{D}{$D$}
\psfrag{T}{$T$}
\psfrag{alfa}{$\alpha$}
 \includegraphics[width=\textwidth]{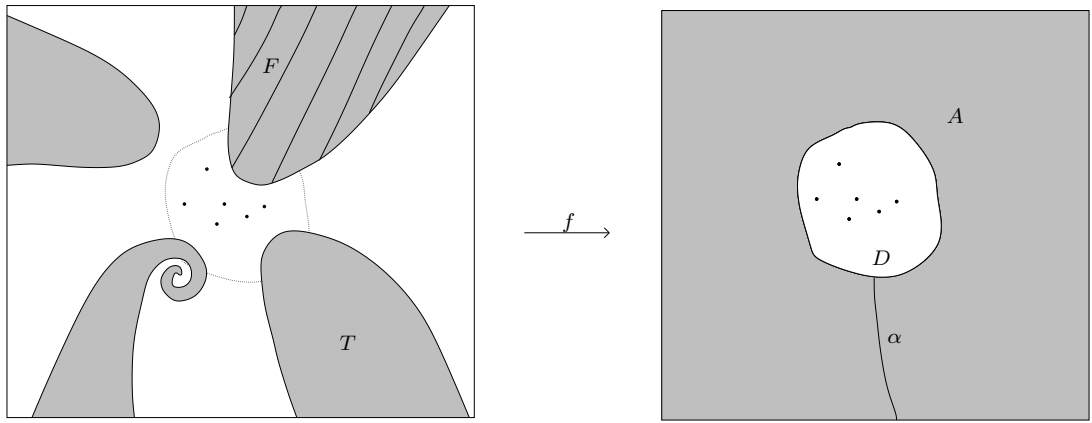}
\caption{Tracts and fundamental domains of a function $f\in\B$.}
\end{figure}

If $z\in\C$ with $f^n (z)\in A$ for all $n\geq 0$, then the 
\emph{external address} of $z$ is the sequence
$\underline{s}=F_0 F_1 F_2 ...$ of fundamental domains defined by 
$f^n (z)\in F_{n}$.  Note that the fact that $\alpha\cap T =\emptyset$ for 
any tract $T$ of $f$ guarantees that $f^n(z)$ does indeed belong to a 
(unique) fundamental domain. 
If $z\in I(f)$ then there exists an integer $n_0$ such that 
  $\vert f^n (z)\vert \in A$ for all $n\geq n_0$.
Let $\sigma$ denote the one-sided shift operator, i.e. 
$\sigma(F_0 F_1 F_2 ...) = F_1 F_2 F_3 ...\;$. 
We say that $\underline{s}$ is \emph{periodic} if 
$\sigma^n(\underline{s})=\underline{s}$ for some $n\geq 1$ and 
\emph{preperiodic} if 
 some image $\sigma^n(\underline{s})$ of $\underline{s}$, $n\geq 1$, 
is periodic but not $\underline{s}$ itself. 

\begin{Definition}[(Dynamic rays and ray tails)] \label{defn_ray}
A \emph{ray tail} of $f$ is an injective curve
\begin{eqnarray*}
g:[t_0,\infty)\rightarrow I(f)
\end{eqnarray*}
(where $t_0>0$) such that for each $n\in\N$, 
$\lim_{t\rightarrow\infty} f^{n}(g(t))=\infty$ and such that, as $n\to\infty$,
$f^n(g(t))\rightarrow\infty$  uniformly in $t$.

A \emph{dynamic ray} of $f$ is then a maximal injective curve 
 $g:(0,\infty)\rightarrow I(f)$ such that 
$g\vert_{[t_0,\infty)}$ is a ray tail for every $t_0>0$. 
\end{Definition}

If $g$ is a dynamic ray, then there exists $t_0>0$ such that, for each
 $n\geq 0$, the curve $f^n(g([t_0,\infty)))$ is contained in a fundamental
 domain $F_n$. The sequence $\underline{s}=F_0 F_1 F_2 \dots$ is called the
 \emph{external address of $g$}.

It is not difficult to check that any periodic ray, as defined in 
 Definition \ref{defn_periodic_rays}, is indeed a dynamic ray
 in the sense of Definition \ref{defn_ray} with a periodic external address.

Conversely, any dynamic ray $g$ with $f^n(g)\subset g$ is a 
 periodic ray in the sense of Definition \ref{defn_periodic_rays}, after 
 a suitable reparametrization. It is less obvious that any 
 dynamic ray with a periodic external address is also itself periodic.
However, if there were two dynamic rays, say $g_1$ and $g_2$, with the same 
external address, it would follow from the proof of \cite[Corollary 3.4]{rempe5}
that $g_1$ is a subset of $g_2$ or the other way around (the stated reference 
implies only that $g_1$ and $g_2$ intersect but the proof shows more, namely 
that one of the two rays would have to be a subset of the other one). 
It would then follow from \cite[Lemma 3.3]{rempe5}
that $g_1$ equals $g_2$.

\subsection{Geometrically finite maps}
\label{subs_gf}
Set
\begin{eqnarray*}
 P_{\J}:=P(f)\cap\J(f)\quad\textrm{and}\quad P_{\F}:=P(f)\cap\F(f).
\end{eqnarray*}

Recall that by Definition \ref{def_gf}, $f$ is called  geometrically finite
  if $S(f)\cap \F(f)$ is compact and $P_{\J}$ is finite.
  Note that every such map belongs to the class $\B$. Furthermore, 
since $P(f^n)=P(f)$ and $\F(f^n)=\F(f)$, it follows that every iterate
of a geometrically finite map is again geometrically finite.

Following McMullen \cite{mcmullen1}, a rational function $R$ is called 
 geometrically finite if $P(R)\cap\J(R)$ is finite. 
 Using classical results on dynamics of rational maps one can easily deduce 
 that the Fatou set of such a map is the union of finitely many 
 attracting and parabolic basins \cite[Chapter $6$]{mcmullen1}. 
 The following statement shows that the same holds for a 
 geometrically finite entire transcendental map. 

\begin{Proposition}
\label{poss_F_comp}
 Let $f$ be a geometrically finite entire transcendental function. 
  Then the Fatou set of $f$ is either empty or consists of 
  finitely many  attracting and parabolic basins. Furthermore, 
  every periodic cycle in the Julia set is repelling or parabolic. 
\end{Proposition}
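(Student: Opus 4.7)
The plan is to combine the classical classification of periodic Fatou components for entire functions in the Eremenko-Lyubich class with the structural hypotheses on $S(f) \cap \F(f)$ and $P_{\J}$. First I note that
$S(f) \subseteq (S(f)\cap\F(f))\cup P_{\J}$
is the union of a compact and a finite set, hence bounded, so $f\in\B$. By the properties of class $\B$ recalled in Section \ref{notations_section}, $f$ has no Baker domains, $I(f)\subset\J(f)$, and every Fatou component is simply connected. The classical Fatou classification then leaves three possibilities for periodic Fatou components: immediate attracting basins, immediate parabolic basins, and Siegel disks.

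To exclude Siegel disks, I would use the standard fact that the boundary of a Siegel disk $U$ (of some iterate of $f$) is contained in $P(f)$, proved by a Montel-type argument lifting inverse branches of iterates of $f$ to the complement of $P(f)$, as in \cite[Theorem 11.17]{milnor}. Since $\partial U \subset \J(f)$, this forces $\partial U \subset P_{\J}$, contradicting the fact that $\partial U$ is a non-degenerate continuum while $P_{\J}$ is finite.

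For the finiteness claim, compactness of $K:=S(f)\cap\F(f)$ implies that $K$ meets only finitely many Fatou components, since these form an open cover of $\F(f)$. Each attracting or parabolic cycle's immediate basin must contain a singular value (the classical theorem of Fatou, extended to entire maps), and different cycles have disjoint immediate basins, so only finitely many attracting or parabolic cycles exist. The main obstacle is ruling out wandering components: for $f\in\B$ every limit function of $(f^n)$ on a wandering component is a constant in $(P(f)\cap\J(f))\cup\{\infty\}$, and combining $I(f)\subset\J(f)$ (which excludes $\infty$) with the finiteness of $P_{\J}$ severely restricts the possible limit values. Any surviving limit value $c$ must be a periodic point of $f$ lying in $P_{\J}$, and then a normal-family/Koebe argument pulls a neighbourhood of $c$ back to the wandering component to show that it eventually enters an attracting or parabolic basin, contradicting its being wandering.

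Finally, a periodic cycle in $\J(f)$ is either parabolic, repelling, or Cremer. To rule out Cremer cycles, I would use the entire-function analogue of Ma\~n\'e's theorem: each Cremer periodic point $z$ is a non-trivial accumulation point of the forward orbit of some singular value $s\in S(f)$. If $s\in\J(f)$, then $s\in P_{\J}$ has a finite forward orbit, leaving no room for non-trivial accumulation at $z$. If $s\in\F(f)$, then by the classification of Fatou components just established, the orbit of $s$ converges to an attracting or parabolic cycle $\Omega$, so $\omega(s)=\Omega$ and $z\in\Omega$, contradicting the fact that $z$ is neither attracting nor parabolic. Hence every periodic cycle in $\J(f)$ is repelling or parabolic.
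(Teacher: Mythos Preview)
Your overall strategy matches the paper's: exclude Siegel disks via $\partial U\subset P(f)$, use compactness of $S(f)\cap\F(f)$ plus Fatou's theorem for finiteness, rule out wandering domains via the Bergweiler--Haruta--Kriete--Meier--Terglane limit-function result, and then treat Cremer points. The Siegel and finiteness steps are fine and essentially identical to the paper.

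The gap is in the wandering-domain step, and it propagates into a circularity. You correctly reduce to the situation where every limit value $c$ of $(f^n|_W)$ is a periodic point in $P_{\J}$, but your ``normal-family/Koebe argument'' is not an argument: you never say what is being pulled back, to where, or why the conclusion follows. More importantly, such an argument cannot succeed in the generality you need. If $c$ is repelling or parabolic, standard local arguments (linearisation, Leau--Fatou petals) do show that an open set cannot have its orbit converge to $c$ while remaining in $\F(f)$. But $c$ could a priori be an irrationally indifferent (Cremer) point, and ruling out convergence of a wandering component to a Cremer cycle is genuinely non-trivial: this is precisely where the paper invokes P\'erez-Marco \cite{perez-marco2}. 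Without that input (or an equivalent), the wandering-domain exclusion is incomplete.

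This then breaks your final paragraph. Your Cremer-point argument explicitly uses ``the classification of Fatou components just established'' to say that a singular value $s\in\F(f)$ has $\omega(s)$ equal to an attracting or parabolic cycle. But if wandering domains have not yet been excluded, $s$ could lie in one, and its orbit could accumulate on the putative Cremer point --- exactly the case you need to rule out. So the two steps lean on each other. The paper avoids this circularity by using P\'erez-Marco to kill wandering domains independently of any Cremer-point analysis, and only afterwards observing that $\overline{P_{\F}}\cap\J(f)$ consists of the finitely many parabolic cycles, which excludes Cremer points via \cite[Corollary~14.4]{milnor}. A smaller point: deducing that $\infty$ is not a limit value from $I(f)\subset\J(f)$ is not immediate, since a subsequence $f^{n_k}(z)\to\infty$ does not by itself put $z$ in $I(f)$; one needs the Eremenko--Lyubich expansion near $\infty$ for functions in $\B$ to upgrade subsequential escape to full escape on an open set.
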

 
\begin{proof}
First note that $f$ cannot have wandering domains. Indeed, if $W$ was
 a wandering domain, then all limits of orbits of points in $W$ would belong 
 to $P_{\J}$ \cite[Theorem]{bergweiler2}, and hence the iterates in $W$
 would converge locally uniformly to a single periodic orbit in $P_{\J}$.
 This orbit clearly cannot be repelling or parabolic. By a result of
 Perez-Marco \cite{perez-marco2}, this orbit also cannot be irrationally
 indifferent. Hence $f$ has no wandering domains. 
 Additionally, if $f$ had a Siegel disk, then its boundary 
 would be contained in $P(f)$ \cite[Theorem 7]{bergweiler1}. This is
 again impossible because $P_{\J}$ is finite, so
 $f$ has no Siegel disks. Thus the Fatou set is the union of attracting and
 parabolic basins.

The set of attracting and parabolic
 basins forms an open cover of the compact set $S(f)\cap\F(f)$. Hence there
 exist finitely many attracting and parabolic basins that cover 
 this set. On the other hand, every attracting or parabolic basin must
 contain at least one point of $S(f)$ \cite[Theorem 7]{bergweiler1}. This
 proves the first claim. 

Furthermore, if $z_0$ was a Cremer point of $f$, then there 
 would be a sequence $w_k$ of points in $P(f)$ converging nontrivially
 to $z_0$ \cite[Corollary 14.4]{milnor}. Since $P_{\J}$ is finite and
 $P_{\F}\cap J(f)$ consists of finitely many parabolic cycles,
 this is also impossible. 
\end{proof}
\begin{Remark*}
 In particular, if $f$ is geometrically finite, then $P(f)$ is bounded.
\end{Remark*}

 Recall that $f$ is called \emph{subhyperbolic} if it is geometrically finite
  and has no parabolic cycles. Note that an entire function is subhyperbolic
  if and only if $P_{\F}$ is compact and $P_{\J}$ is finite. 
  An entire function is called \emph{postsingularly finite} if $P(f)$ is
  finite. 
  The Fatou set of such a map is either empty or the union of finitely many
  superattracting basins.
 Clearly, every postsingularly finite map is subhyperbolic, and 
  in particular geometrically finite.

\section{Geometric constructions and dynamics}
\label{sec_3}
To prove our main theorem we will use a hyperbolic domain $U$ such that our function
$f$ is expanding with respect to the hyperbolic metric of $U$, 
and such that $U$ has simple topology.
Our requirements are formalized in the following definition.

\begin{Definition}[(Admissible expansion domain)]
\label{dfn_adm_exp_dom}
Let $f$ be an entire transcendental function and let $z_0$ be a fixed point of $f$
which is either repelling or parabolic. A domain $U=U(f,z_0)\subset\C$ is called an 
\emph{admissible expansion domain} of $f$ at $z_0$, if the following properties hold:
\begin{itemize}
 \item[$(a)$] $U\subset\C\setminus(P(f)\cup\{ z_0\})$ and $\infty$ is an isolated boundary 
point of $U$.
\item[$(b)$] $f^{-1}(U)\subsetneq U$.
\item[$(c)$] $U$ is finitely-connected. Furthermore, $U\neq \C\setminus P(f)$.
\item[$(d)$] The point $z_0\in\partial U$ is accessible from $U$. 
If $K_0$ is the component of $\C\setminus U$ containing $z_0$, then
$K_0\setminus \{ z_0\}$ has finitely many components. 
\end{itemize}
\end{Definition}

Note that every map for which there is an admissible expansion domain at some 
repelling or parabolic fixed point must have a bounded postsingular set.
Furthermore, it follows from Definition \ref{dfn_adm_exp_dom} and 
\cite[Theorem 7]{bergweiler1} that, if $z_0\in\C$ is a 
puncture of $U$, then $z_0$ is a repelling fixed point of $f$.
We will show later that, if $P(f)$ is finite, then we can choose 
$\C\setminus U$ to be finite as well.

Also note that if $U$ is an admissible expansion domain of $f$ at $z_0$, then 
$U$ is also an admissible expansion domain of $f^n$ at $z_0$.

Let $U$ be an admissible expansion domain of $f$. 
Observe that by Lemma \ref{lem_U_hyp}, $U$ is hyperbolic. 
By definition, there exists at least one point 
$w\in \C\setminus (U\cup P(f))$.
Note that $w$ has has infinitely many
preimages under $f$; otherwise, 
$w$ would be a (Picard) exceptional value but every such point 
is also an asymptotic value of $f$
\cite[Chapter 5, Theorem 1.1]{gold_ost}.

\begin{sa}
Throughout Section \ref{subs_L} and \ref{subsec_33} we will assume 
that $f$ is an entire transcendental map and 
$z_0$ is a repelling or parabolic fixed point of $f$ with 
an admissible expansion domain, denoted by $U$.
\end{sa} 

As mentioned in the introduction, we will prove that Theorem \ref{thm1}
holds when we replace the condition that $f$ is geometrically finite by 
the property that every iterate of $f$ has an admissible expansion domain at every 
repelling or parabolic fixed point. 
It will become clear that the existence of admissible expansion domains is what is 
essential for our idea to work. 
We will show in section \ref{subs_aed_gf}
that \emph{every} iterate of a geometrically finite map has an 
admissible expansion domain at \emph{any} of its repelling  
or parabolic fixed points;
we will also give an example of a map which is not geometrically finite 
but to which our methods still apply.

We note that we do not require that $f$ has finite order.
 Indeed,
 we will prove a general combinatorial statement (Theorem \ref{thm_finite}),
 requiring only
 our standing assumption. Theorem \ref{thm1} will then be deduced by
 applying this result to an iterate of the original geometrically finite function 
of finite order.

\subsection{Legs and the leg map $\L$}
\label{subs_L}
\begin{Definition}
\label{def_spider}
A \emph{leg} is an injective curve 
$\gamma:[0,\infty]\rightarrow U\cup\lbrace z_0, \infty\rbrace$ such that 
\begin{itemize}
\item[(i)]$\gamma\vert_{(0,\infty)}\subset U$,
\item[(ii)]$\gamma(0)=z_0$ and $\gamma(\infty)=\infty$.
\end{itemize}
\end{Definition} 
  
Two legs $\gamma_1$ and $\gamma_2$ are called 
\emph{equivalent} ($\gamma_1\sim\gamma_2$)
if they are homotopic in $U$ relative to the set of endpoints 
$\lbrace z_0,\infty\rbrace$. For a leg $\gamma$ we will 
denote its equivalence class by $[\gamma]$. 

By assumption, $z_0$ is not a critical point of $f$, 
so every leg ending at $z_0$ has a unique preimage curve 
ending at $z_0$ and this is again a leg. The map 
which assigns such a pullback to each leg $\gamma$ will be called the \emph{leg map} 
and denoted by $\L$. As usual,
we will denote the $n$-th iterate of $\L$ by $\L^n$.

It follows from the Homotopy Lifting Property that if 
$\gamma_1\sim\gamma_2$, then this also holds for their images, 
i.e.  $\L(\gamma_1)\sim\L(\gamma_2)$. Hence, 
\emph{the leg map $\L$ descends to a map on the set of equivalence classes of legs}.

We will often replace pieces of arbitrary legs by 
pieces of geodesics in their homotopy classes, 
which is possible by Proposition \ref{roy}. 
We will call a leg that is a geodesic with respect to the 
hyperbolic metric on $U$ a \emph{geodesic leg}.

We are now able to formulate the main result of Section \ref{sec_3}.
\begin{Theorem}
\label{thm_finite}
Let $\gamma$ be a leg. Then there exist integers $m$ and $n$ 
such that $\L^m(\gamma)\sim\L^n(\gamma)$.
\end{Theorem}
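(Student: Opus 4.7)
The plan is to pass to geodesic representatives of the legs $\L^n(\gamma)$ and combine the hyperbolic contraction inherent in the pullback with a compactness argument. For each $n\ge 0$ let $g_n$ denote the unique geodesic representative in $U$ of the homotopy class $[\L^n(\gamma)]$, supplied by Proposition \ref{roy}. The fundamental metric input is that $f\colon f^{-1}(U)\to U$ is a covering map, since $U$ is disjoint from $S(f)\subset P(f)$, so by Pick's theorem $f$ is a local isometry from $(f^{-1}(U),\rho_{f^{-1}(U)})$ to $(U,\rho_U)$; but because $f^{-1}(U)\subsetneq U$, one has $\rho_{f^{-1}(U)}>\rho_U$ strictly, and uniformly on compact subsets of $f^{-1}(U)$. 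Consequently, the leg map $\L$ strictly contracts $\rho_U$-length along curves, with a uniform contraction factor whenever the curve stays in a prescribed compact subset of $f^{-1}(U)$.

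Next I install reference collars at both endpoints of the legs. At $\infty$, which is an isolated boundary point of $U$ by condition $(a)$, Lemma \ref{jorg} provides a horosphere $H_\infty$ so that any simple geodesic entering $H_\infty$ terminates at $\infty$. At $z_0$, condition $(d)$ says there are only finitely many accesses to $z_0$ from $U$, and since $z_0$ is repelling or parabolic the local inverse branch of $f$ fixing $z_0$ is well-defined; after shrinking a small topological disk neighborhood $V\ni z_0$ it maps each component of $V\cap U$ strictly into itself. In particular every $\L^n(\gamma)$ shares the same access at $z_0$ (and the unique access at the puncture $\infty$). Defining the \emph{core} of $g_n$ to be $c_n:=g_n\cap (U\setminus(H_\infty\cup V))$, each $c_n$ is a compact geodesic arc in the compact region $K:=\overline{U\setminus(H_\infty\cup V)}$ joining two prescribed boundary arcs.

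The central technical step is to show that $\ell_U(c_n)$ is bounded independently of $n$. Since $g_{n+1}$ is the geodesic representative in the class of $\L(g_n)$, we have $\ell_U(g_{n+1})\le \ell_U(\L(g_n))$. Restricted to the core, strict contraction gives $\ell_U(\text{core part of }\L(g_n))\le \lambda\,\ell_U(c_n)$ for some $\lambda<1$, while the portions of $\L(g_n)$ inside the two collars differ from the corresponding portions of $g_{n+1}$ by at most a bounded amount: the $V$-portion because the local inverse branch at $z_0$ is a uniform contraction (or, in the parabolic case, is controlled by repelling Fatou coordinates on a repelling petal), and the $H_\infty$-portion because at a puncture, a geodesic entering the horosphere is determined up to bounded correction by its access. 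These together yield a recursion $\ell_U(c_{n+1})\le \lambda\,\ell_U(c_n)+C$ for a uniform constant $C$, and hence a uniform bound $\ell_U(c_n)\le R$.

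Finally, the family $\{c_n\}$ consists of geodesic arcs of bounded hyperbolic length in the compact region $K$, with endpoints on two fixed boundary components. By Arzel\`a--Ascoli this family is precompact in the uniform topology, and since $K$ is a compact surface with finitely many boundary arcs, only finitely many relative homotopy classes of arcs realize these endpoint data within hyperbolic length $R$. Two legs $g_n$ and $g_m$ whose cores represent the same relative class and which agree in the (automatic) access at $z_0$ and (unique) access at $\infty$ are homotopic as legs, because the tails inside the two collars are determined up to homotopy by these access data. This forces the sequence $([\L^n(\gamma)])_{n\ge 0}$ to take only finitely many values, producing the required pair $m\ne n$ with $\L^m(\gamma)\sim \L^n(\gamma)$. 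The hardest point in this plan is verifying the core-length recursion; the parabolic case at $z_0$ will be the most delicate, since one cannot use a linearization and must argue via Fatou coordinates (or the intrinsic geometry supplied by admissibility) to control the correction constant $C$.
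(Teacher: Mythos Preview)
Your overall strategy---pass to geodesic representatives, establish a length recursion $\ell_U(c_{n+1})\le\lambda\,\ell_U(c_n)+C$ on a truncated ``core'', and deduce finiteness of homotopy classes from a uniform length bound---is precisely the architecture of the paper's proof. But the central step, the contraction inequality, has a real gap as you have set it up.

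First, a minor point: your region $K=\overline{U\setminus(H_\infty\cup V)}$ is not compact in general, since $U$ may have further punctures (points of $P_{\J}$) or non-degenerate boundary components, and you have only excised collars at $z_0$ and $\infty$. The paper's compact set $Y$ (Theorem~\ref{thm_Y}) removes horospheres at \emph{every} puncture and geodesic collars around the remaining boundary components. Second, and this is the essential issue: your inequality $\ell_U(\text{core of }\L(g_n))\le\lambda\,\ell_U(c_n)$ does not follow from uniform contraction on compact subsets of $V=f^{-1}(U)$. The subarc of $\L(g_n)$ that $f$ carries onto $c_n=g_n\cap K$ lies in $f^{-1}(K)$, and $f^{-1}(K)$ is \emph{unbounded} (it meets every tract over the horosphere at $\infty$), so compact-set contraction says nothing about it. Conversely, $\L(g_n)\cap K$ maps under $f$ to $g_n\cap f(K)$, not to $c_n$, so there is no direct comparison that way either. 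The paper closes this gap with Proposition~\ref{prop_Y}: for compact $Y\subset U$ one has $\rho_U<\eta\,\rho_V$ uniformly on \emph{all} of $f^{-1}(Y)$, unboundedness notwithstanding. The reason is that near $\infty$ one has $\rho_U(z)=O\bigl(1/(|z|\log|z|)\bigr)$ (puncture), whereas $V$ omits an infinite sequence $w_j\notin U$ with $|w_{j+1}|\le K|w_j|$, forcing $\rho_V(z)\ge O(1/|z|)$; hence $\rho_U/\rho_V\to 0$ at $\infty$. This is the analytic input your outline is missing, and without it the recursion cannot be started. The bounded correction terms at the two ends (the paper's constants $M$ and $P$ in Theorem~\ref{thm_Y}(b),(c)) also require more than you indicate---in particular, the $H_\infty$-portion of $\L(g_n)$ is a geodesic of the tract, not of $U$, and controlling its $\rho_U$-length inside $Y\setminus f^{-1}(Y)$ is a separate compactness argument---but the uniform contraction on $f^{-1}(Y)$ is the key missing idea.
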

The proof of this theorem will be given at the end of Section \ref{subsec_33}.

\begin{figure}
\centering
\begin{minipage}{.3\linewidth}
\includegraphics[width=\linewidth]{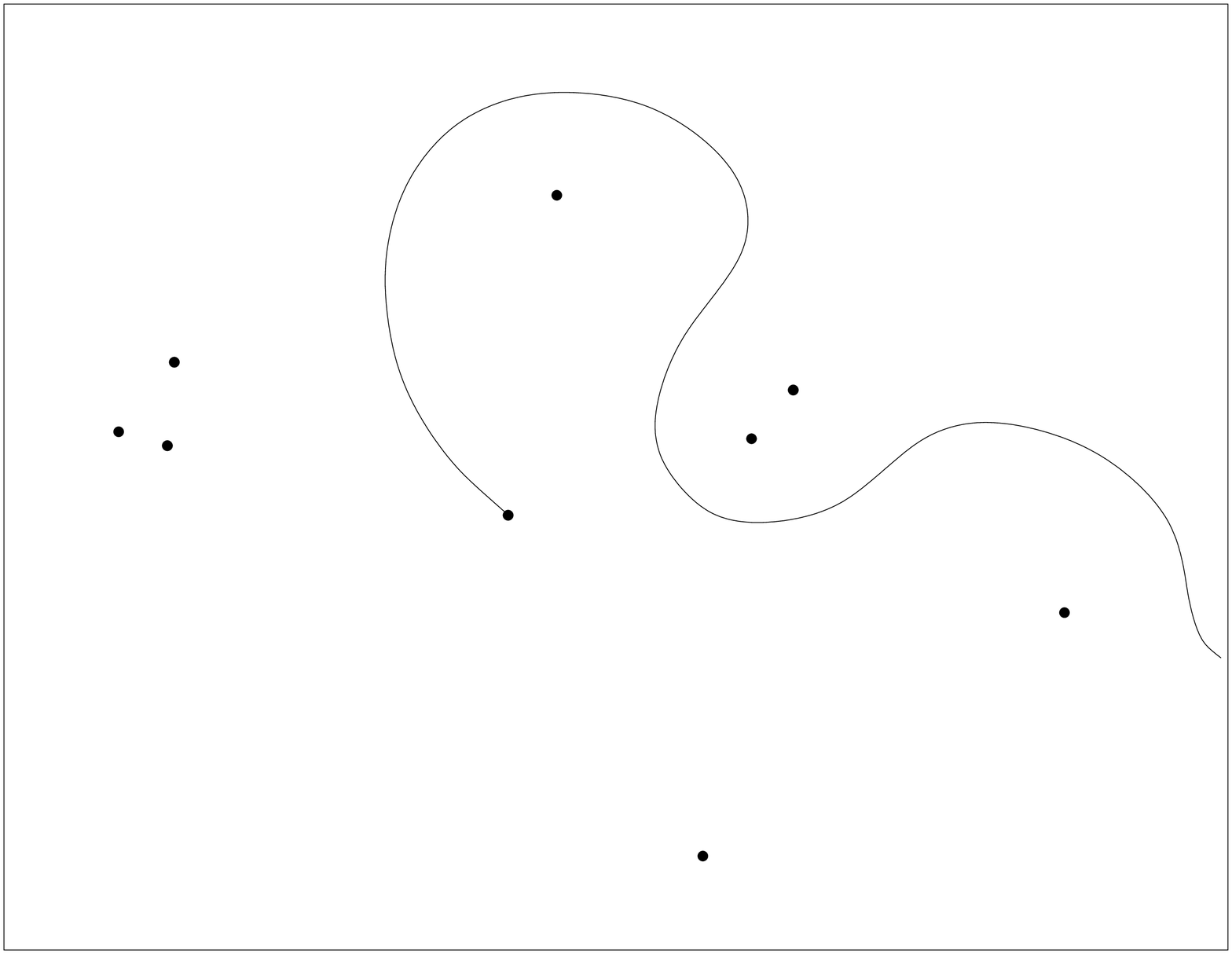}
\end{minipage}
\hspace{.025\linewidth}
\begin{minipage}{.3\linewidth}
\includegraphics[width=\linewidth]{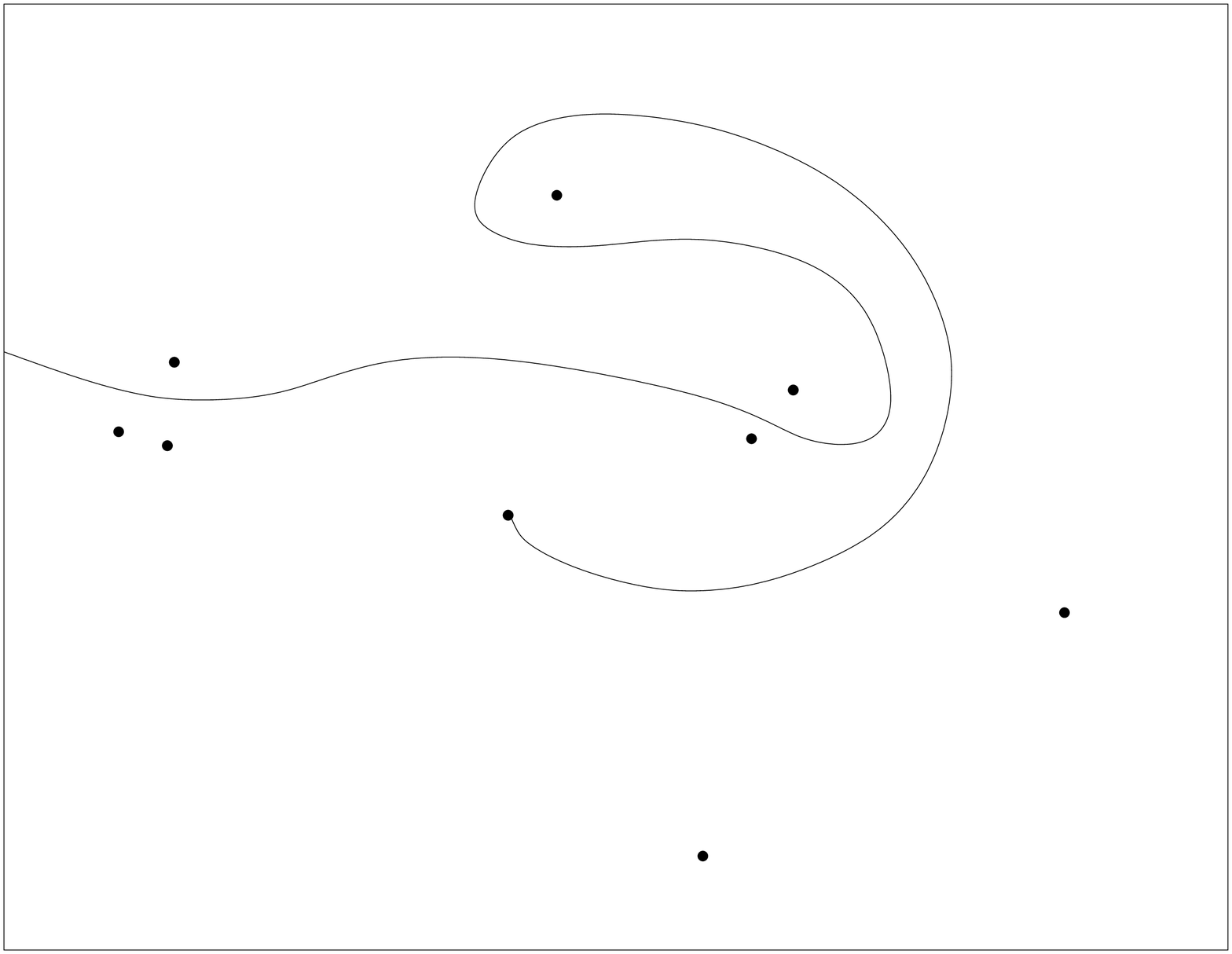}
\end{minipage}
\hspace{.025\linewidth}
\begin{minipage}{.3\linewidth}
 \includegraphics[width=\linewidth]{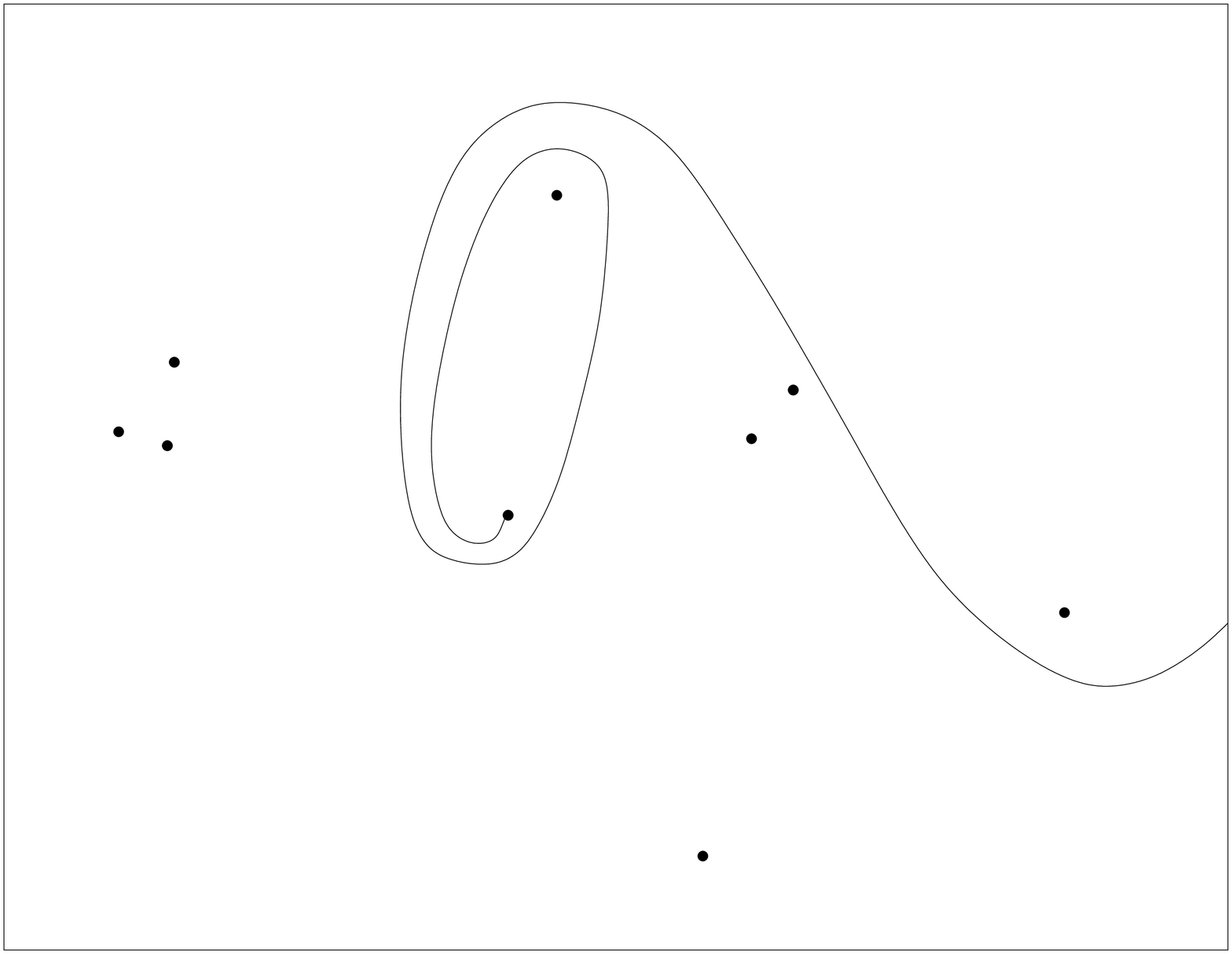}
\end{minipage}
\caption{Legs belonging to three different equivalence classes.}
\end{figure}

\subsection{Iteration of $\L$ and a finiteness statement}
\label{subsec_33}

From now on, let us denote the density of the hyperbolic metric
on the admissible expansion domain $U$ of $f$ at $z_0$ by $\rho_U(z)$. 
We define 
\begin{align*}
V:=f^{-1}(U). 
\end{align*}
Note that $V$ does not have to be connected.  
Every component $V_i$ of $V$ is again a hyperbolic domain with corresponding density map  
$\rho_{V_i}$. If $z\in V$, then $z$ lies in a unique component $V_i$ and for simplicity,
we will denote the density of the hyperbolic metric at $z$ by $\rho_V(z)$.

\begin{Proposition}
\label{prop_Y}
Let $Y\subset U$ be a compact connected set with finitely many boundary components. 
Then there exists a constant $\eta<1$ such that 
\begin{align*}
\rho_U(z)<\eta\cdot \rho_V(z)\quad\text{holds for all }z\in f^{-1}(Y).
\end{align*}
\end{Proposition}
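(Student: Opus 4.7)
The plan is to combine two applications of Pick's theorem with a compactness argument. Since $U\subset\C\setminus P(f)\subset\C\setminus S(f)$, the restriction $f:V\to U$ is a holomorphic covering, so Pick for coverings yields the local-isometry identity $\rho_V(z)=|f'(z)|\,\rho_U(f(z))$. Since property (b) gives $V\subsetneq U$, Pick for the inclusion yields the strict pointwise inequality $\rho_U(z)<\rho_V(z)$ on $V$. Hence $\phi(z):=\rho_U(z)/\rho_V(z)$ is continuous on $V$ with values in $(0,1)$, and it suffices to show $\eta:=\sup_{f^{-1}(Y)}\phi<1$.

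I argue by contradiction: take $z_n\in f^{-1}(Y)$ with $\phi(z_n)\to 1$ and extract a subsequence with $z_n\to z^*\in\widehat{\C}$. If $z^*\in V$, continuity of $\phi$ forces $\phi(z^*)=1$, contradicting strict Pick. If $z^*\in\partial V\cap\C$, then $z^*\notin V$ and openness of $V=f^{-1}(U)$ force $f(z^*)\in\partial U$; but compactness of $Y$ gives $f(z^*)=\lim f(z_n)\in Y\subset U$, a contradiction. Only the case $z^*=\infty$ remains.

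For $z^*=\infty$, the plan exploits the asymmetry between $U$ and $V$ at the puncture. Property (a) makes $\infty$ an isolated puncture of $U$, so standard asymptotics give $\rho_U(z)\asymp 1/(|z|\log|z|)\to 0$. By contrast, $\infty$ is \emph{not} isolated in $\partial V$: the essential singularity of $f$ combined with property (c) (which, via the remark following the Standing Assumption, provides non-exceptional points in $\partial U\cap\C$) forces $\partial V\cap\C=f^{-1}(\partial U\cap\C)$ to be unbounded, accumulating at $\infty$. Heuristically, this dense boundary makes $\rho_V/\rho_U\to\infty$ along $z_n\to\infty$, giving $\phi(z_n)\to 0$ and a contradiction. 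Concretely, setting $m:=\inf_Y\rho_U>0$ (finite and positive by compactness of $Y$ in $U$), the covering identity gives $\rho_V(z_n)\geq m\,|f'(z_n)|$, whence $\phi(z_n)\leq \rho_U(z_n)/(m\,|f'(z_n)|)$, and the desired convergence $\phi(z_n)\to 0$ is immediate provided $|f'(z_n)|$ does not decay as fast as $\rho_U(z_n)$.

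The main obstacle I anticipate is ruling out the degenerate scenario $|f'(z_n)|\to 0$ along the escaping sequence, in which the upper bound on $\phi(z_n)$ becomes vacuous. The natural tool is a normal-families argument applied to the translated family $F_n(\zeta):=f(z_n+\zeta)$, leveraging the hypothesis $Y\cap S(f)=\emptyset$ (which holds since $Y\subset U\subset\C\setminus P(f)\subset\C\setminus S(f)$): any non-constant subsequential (Zalcman-type) limit $g$ arising from $F_n(0)\in Y$ bounded and $F_n'(0)\to 0$ would have $g(0)\in Y$ and $g'(0)=0$, producing a critical---hence singular---value of $f$ inside $Y$, contradicting the disjointness $Y\cap S(f)=\emptyset$ and closing the argument.
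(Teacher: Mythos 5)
Your reduction is sound up to the case $z^*=\infty$: the covering identity $\rho_V(z)=|f'(z)|\,\rho_U(f(z))$, the strict Pick inequality, and the elimination of finite limit points all match the structure the paper needs. The gap is in the treatment at infinity, and it is twofold.

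First, the normal-families/Zalcman step is not correct as stated. A genuine Zalcman limit $g(\zeta)=\lim f(z_{n_k}+a_k+\rho_k\zeta)$ is obtained after rescaling by $\rho_k\to 0$, and there is no reason why $g'(0)=0$ should hold or why a critical value of $g$ should be a singular value of $f$ (singular values do not pass to rescaling limits). If instead you work with the un-rescaled translates $F_n(\zeta)=f(z_n+\zeta)$, you do not know that this family is normal; and even when it is, a \emph{constant} limit is perfectly possible and produces no Hurwitz-type critical point, hence no contradiction. So the claim ``$|f'(z_n)|\not\to 0$ along $z_n\to\infty$ with $f(z_n)\in Y$'' is not established.

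Second, and more fundamentally, that claim is not what you actually need, and it is most likely false in the generality considered. The paper's own estimates only yield $\rho_V(z)\geq c/|z|$ and hence, via the covering identity and $\rho_U\asymp m$ on $Y$, only $|f'(z)|\gtrsim 1/|z|$ on $f^{-1}(Y)$ --- which \emph{does} tend to $0$. What saves the argument is the mismatch in the \emph{rates} of decay: $\rho_U(z)\lesssim \frac{1}{|z|\log|z|}$ because $U$ has an isolated puncture at $\infty$, whereas $\rho_V(z)\gtrsim \frac{1}{|z|}$ because $\C\setminus V$ contains a geometrically spaced sequence $w_j\in U\setminus V$ of preimages of a non-exceptional point $w\in\C\setminus(U\cup P(f))$, to which one applies the density estimate of \cite[Proposition 2.1]{rempe3}. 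This gives $\rho_U(z)/\rho_V(z)\to 0$ along $z\to\infty$ in $f^{-1}(Y)$, without any uniform lower bound on $|f'|$. Your write-up identifies the correct target (``$|f'(z_n)|$ must not decay as fast as $\rho_U(z_n)$'') but then substitutes the stronger and unsupported claim that $|f'(z_n)|\not\to 0$; closing the gap requires a genuine lower bound on $\rho_V$ near $\infty$, for which the comparably spaced omitted sequence is the natural (and, in the paper, the actual) tool.
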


\begin{proof}
For any compact subset of $f^{-1}(Y)$ there is certainly such a constant 
$\eta$, since $\partial Y$ has only finitely many components. 
Hence we have to consider only sufficiently 
large points $\tilde{z}\in f^{-1}(Y)$. 

Let $w\in\C\setminus (U\cup P(f))$ be a non-exceptional value of $f$. 
Then $w$ has infinitely many preimages under $f$ and all but finitely many of them 
are contained in $U\backslash V$.

\begin{claim*}
There exists a sequence 
$w_j\in U\backslash V$ and a constant $K>1$ such that 
$\vert w_{j+1}\vert\leq K\vert w_j\vert$ and $f(w_j)=w$ 
holds for all $j\in\N$.
\end{claim*}

\begin{proof of claim}A sketch of a proof can be found in 
\cite[proof of Lemma 5.1]{rempe3}. 
For completeness we will elaborate the arguments given in \cite{rempe3}.

Let $\gamma\subset U$ be a Jordan curve, such that the 
bounded component of $\C\backslash\gamma$ contains 
$S(f)$ but not $w$, and let $U_{\infty}$ denote the unbounded component of 
$\C\backslash\gamma$. Then $f^{-1}(U_{\infty})$ is a countable union 
of tracts $T_i$ and $f\vert_{T_i} : T_i\rightarrow U_{\infty}$ 
is a universal covering for every $i$. Let us pick a tract $T_0$. 
Since $w\in U_{\infty}$, there is an infinite sequence $w_i$ of preimages 
of $w$ in $T_0$, such that the distance $d_{T_0}(w_i,w_{i+1})$ 
measured in the hyperbolic metric of $T_0$ is constant. 

We can assume w.l.o.g. that $0\not\in T_0$, and so 
by equation \ref{hyp_est} we obtain  
\begin{eqnarray*}
\rho_{T_0}(z)\geq\frac{1}{2\vert z\vert}.
\end{eqnarray*}
Let $A:= d_{T_0}(w_i, w_{i+1})$. It follows that
\begin{eqnarray*}
A=\inf_{\gamma}\int_{t_0} ^{t_1} \rho_{T_0}(\gamma(t))\cdot\vert\gamma^{'}(t)\vert dt 
\geq\inf_{\gamma}\int_{t_0} ^{t_1}\frac{\vert\gamma^{'}(t)\vert }{2\vert\gamma(t)\vert}dt
 =\frac{1}{2}(\log\vert w_{i+1}\vert - \log\vert w_i\vert),
\end{eqnarray*}
where $\gamma:\left[ t_0, t_1\right] \rightarrow T_0$ is any 
rectifiable curve that connects $w_0$ and $w_1$. Hence 
\begin{eqnarray*}
\vert w_{i+1}\vert \leq \e^{2A}\vert w_i\vert.
\end{eqnarray*}
The claim now follows with $K=\e^{2A}>1$.
\end{proof of claim}

Recall that $U$ contains a punctured disk at $\infty$, hence 
\begin{align*}
 \rho_U(z)\leq\Oo\left(\frac{1}{\vert z\vert\cdot\log\vert z\vert}\right)\text{ as }z\to \infty.
\end{align*}
On the other hand, $V\subset\C\setminus\{ w_n\}$, and it follows from 
\cite[Proposition 2.1]{rempe3} that
\begin{align*}
\rho_V(z)\geq\Oo\left(\frac{1}{\vert z\vert}\right)\text{ as }z\to\infty.
\end{align*}
Hence $\rho_U(z)/\rho_V(z)\to 0$ as $z\to\infty$ and the statement follows.

\end{proof}

\begin{Proposition}
\label{lindisk}
Assume that $z_0$ is an isolated boundary point of $U$. Then 
for every horosphere $H_{\eps}(\infty)$ there exists a 
horosphere $H_{\delta}(z_0)$ such that 
$H_{\delta}(z_0)\subset U\backslash\overline{H_{\eps}(\infty)\cup f^{-1}(H_{\eps}(\infty))}$ 
and $f(H_{\delta}(z_0))\supset H_{\delta}(z_0)$. 

Furthermore, $\delta$ can be replaced by any $\tilde{\delta}<\delta$.
\end{Proposition}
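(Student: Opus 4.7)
The plan is to exploit two facts: that the isolated boundary point $z_0$ is, by the remark following Definition \ref{dfn_adm_exp_dom}, a repelling fixed point of $f$; and that, for a small parameter, the horosphere $H_\delta(z_0)$ is simply the image under the local biholomorphism piece of the covering $p_U\colon \D^*\to U$ of a Euclidean punctured disk around $0$. This reduces the whole statement to an elementary local analysis at $z_0$.

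First I would fix $r_0>0$ small enough that $\phi := p_U|_{\D_{r_0}(0)}$ is a conformal isomorphism onto a punctured Euclidean neighborhood of $z_0$ (extended by $\phi(0)=z_0$), so that $H_\delta(z_0)=\phi(\D_\delta(0)\setminus\{0\})$ for every $\delta\le r_0$. Shrinking $r_0$ further using continuity of $f$ at $z_0$, the local lift $g:=\phi^{-1}\circ f\circ \phi$ is a well-defined holomorphic map on $\D_{r_0}(0)$ with $g(0)=0$ and $g'(0)=\mu := f'(z_0)$, where $|\mu|>1$. Since $g(w)=\mu w + \Oo(|w|^2)$ near $0$, for all sufficiently small $\delta$ we have $|g(w)|>\delta$ on $\partial\D_\delta(0)$, and since $g$ restricts to a biholomorphism on $\D_\delta(0)$ for $\delta$ small, its image is an open set whose boundary lies outside $\overline{\D_\delta(0)}$; hence $g(\D_\delta(0))\supset \D_\delta(0)$. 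Pushing forward through $\phi$ yields $f(H_\delta(z_0))\supset H_\delta(z_0)$.

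For the disjointness, I would observe that $\overline{H_\eps(\infty)}$ has positive Euclidean distance from $z_0$, since $\infty$ and $z_0$ are distinct isolated boundary points of $U$; so continuity of $f$ at $z_0$ with $f(z_0)=z_0$ allows one to shrink $\delta$ once more and force both $H_\delta(z_0)$ and $f(H_\delta(z_0))$ into a fixed Euclidean ball around $z_0$ that is disjoint from $\overline{H_\eps(\infty)}$; this is precisely $H_\delta(z_0)\subset U\setminus\overline{H_\eps(\infty)\cup f^{-1}(H_\eps(\infty))}$. The final assertion that $\delta$ may be replaced by any $\tilde\delta<\delta$ then follows because the horospheres at $z_0$ are nested in the parameter, and the preceding arguments apply verbatim at $\tilde\delta$. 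The main obstacle I anticipate is purely bookkeeping: coordinating the several smallness conditions on $r_0$ and $\delta$ so that they hold simultaneously. Once the local chart $\phi$ is in place, the repelling dynamics at $z_0$ do all the real work.
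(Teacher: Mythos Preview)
Your proof is correct and follows essentially the same strategy as the paper's: both use that $z_0$ is a repelling fixed point, parametrize the horospheres via the local covering $p\colon\D^*\to U$, and then use the power-series expansion at the fixed point to show that $f$ sends the boundary of a small horosphere outside the horosphere itself. Your version is slightly tidier in that you conjugate $f$ to $g=\phi^{-1}\circ f\circ\phi$ and work directly with Euclidean disks in the $\D$-coordinate, whereas the paper stays in the $z$-plane and compares the inner and outer Euclidean radii of the horocycle $h_\delta(z_0)$ to reach the same conclusion.
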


\begin{proof}
First recall that $z_0$ is neccessarily a repelling fixed point of $f$. 

The first statement is obvious since $z_0\notin\overline{H_{\eps}(\infty)}$ holds 
for any horosphere $H_{\eps}(\infty)$ and since $z_0$ is a fixed point of $f$. 

There exist a covering map $p:\D^{*}\rightarrow U$ and a constant $0<\tau<1$ such that 
$p$ maps $\D_{\tau}(0)\setminus \{ 0\}$ one-to-one
to the horosphere $H_{\tau}(z_0):= p(\D_{\tau}(0)\setminus \{ 0\})$ at $z_0$. 
By the Riemann Removable Singularity Theorem, 
the embedding $p\vert_{\D_{\tau}(0)\setminus\{ 0\}}$ can be continued holomorphically 
to $0$.

For any $\delta<\tau$ let $h_{\delta}(z_0)=p(\mathbb{S}_{\delta})$, where 
$\mathbb{S}_{\delta}:=\partial\D_{\delta}(0)$, 
and denote by $i(\delta)$ and $o(\delta)$ its inner and outer 
radius, respectively. Clearly, $i(\delta), o(\delta) \to 0$ as $\delta\to 0$ as well as  
\begin{eqnarray*}
 \frac{o(\delta)}{i(\delta)}\rightarrow 1\quad\textrm{as}\; \dist(h_{\delta}(z_0),z_0)\rightarrow 0.
\end{eqnarray*}
 
By composing $f$ with a linear transformation, we can assume that $z_0=0$, 
so the power series of the function $f$ has the form
\begin{eqnarray*}
 f(z)=\mu(0)\cdot z + O(z^2)
\end{eqnarray*}

in a neighbourhood of $0$, where $\mu(0)$ is the multiplier of $0$. 
Let $\vert z\vert=i(\delta)$. Then
\begin{eqnarray*}
\left\vert\frac{f(z)}{z}\right\vert\geq\frac{\vert\mu(0)\vert\cdot i(\delta) - O(i(\delta)^2)}{i(\delta)}=\vert\mu(0)\vert - O(i(\delta)) > \frac{o(\delta)}{i(\delta)}
\end{eqnarray*}
for every sufficiently small $\delta$. 
Hence every point $z\in h_{\delta}(0)$ is mapped outside 
the circle at $0$ with radius $o(\delta)$ and the statement follows.
\end{proof}

Recall that our goal in Section \ref{sec_3} is to prove 
Theorem \ref{thm_finite} which states that the iteration of 
the leg map produces only finitely many equivalence classes of legs. 
Since equivalence classes of legs arise, roughly speaking, 
by winding around components of $\partial U$, we want to find a 
compact subset $Y$ of $U$ so that producing additional homotopy 
implies increase of length of leg-pieces contained in $Y$. By 
choosing $Y$ so that Proposition \ref{prop_Y} applies, we can later use 
uniform contraction arguments to control the lengths of the considered 
pieces of legs.

Note that if $\gamma$ is \emph{any} leg, it is fairly impossible to make 
useful statements about the location of its iterated images $\L^n(\gamma)$ related to
an \emph{arbitrary} compact set $Y$. Only by constructing $Y$ carefully using hyperbolic
geometry and working with geodesic legs rather than arbitrary legs, we obtain  
additional tools that enable us to control the lengths of geodesic leg-pieces.
 
\begin{Theorem}
\label{thm_Y}
There exists a compact path-connected set $Y\subset U$ 
with finitely many boundary components 
such that:
\begin{itemize}
\item[$(a)$] If $g$ is a geodesic leg, then $g\cap Y$ is non-empty and connected. 
Furthermore, if $K_1$ and $K_2$ are distinct components of $\widehat{\C}\setminus U$, 
then $K_1$ and $K_2$ are contained in two distinct 
components of $\widehat{\C}\setminus Y$.
\item[$(b)$] Let $C(z_0)$ and $C(\infty)$ denote the components of $U\setminus Y$
that contain $z_0$ and $\infty$, respectively, as boundary points, 
and for a leg $\gamma$, denote by $\tilde{\ell}_U(\gamma)$ the hyperbolic length 
in $U$ of the longest subpiece of $\gamma$ connecting 
the boundaries of $C(z_0)$ and $C(\infty)$ in $U$.
Then there exists a constant $0<P<\infty$, such that  
if $g$ is a geodesic leg and $\gamma\in [g]$ is another leg,
then $\tilde{\ell}_U(g)\leq \tilde{\ell}_U(\gamma) + P$. 
\item[$(c)$] There exists a constant $0<M<\infty$, such that 
if $g$ is a geodesic leg, then there exists a leg $\gamma_1\in[\L(g)]$ with
$\tilde{\ell}_U(\gamma_1)\leq\ell_U(\L(g)\cap f^{-1}(Y))+M$.
\end{itemize}

\end{Theorem}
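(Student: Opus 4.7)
My plan is to construct $Y$ as the thick part of $U$ in the sense of the thick-thin decomposition: I excise from $U$ sufficiently small neighborhoods of each component of $\widehat{\C}\setminus U$. Concretely, for each puncture $w$ of $U$ (including $\infty$, and $z_0$ when it is a puncture) I remove an open horoball $H_{\eps_w}(w)$ with $\eps_w$ chosen small enough for Lemma \ref{jorg} to apply; at $\infty$ and at $z_0$ (if applicable) I additionally impose the dynamical compatibility supplied by Proposition \ref{lindisk}. For each non-puncture component $K$ of $\widehat{\C}\setminus U$ I remove a small open neighborhood $N_K$ in $\widehat{\C}$ whose boundary in $U$ is a smooth Jordan curve, and it is convenient to take $\partial N_K \cap U$ to be a closed geodesic of $U$ bounding a ``funnel'' end of $U$ near $K$. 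Finite-connectedness of $U$ ensures there are only finitely many such neighborhoods, and they can be taken pairwise disjoint in $\widehat{\C}$; the resulting $Y$ is compact, path-connected, and has finitely many boundary components, one around each component of $\widehat{\C}\setminus U$.

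For part (a), the separation claim is immediate from the pairwise disjointness of the excised neighborhoods in $\widehat{\C}$. For connectedness of $g\cap Y$ when $g$ is a geodesic leg, Lemma \ref{jorg} forces $g$ to enter the horoballs at $\infty$ and at $z_0$ (when $z_0$ is a puncture) in exactly one terminal tail each. For the remaining non-puncture neighborhoods $N_K$, using that $\partial N_K \cap U$ is a closed geodesic and lifting to the universal cover $\D$, two distinct geodesics of $\D$ meet at most once, and by choosing the funnel depths sufficiently thin one can show that after $g$ exits $Y$ through such a boundary it cannot re-enter. \emph{This crossing analysis is the main technical obstacle,} requiring careful bookkeeping of how $g$ and its lifts interact with the boundary geodesics of the removed funnel ends.

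For part (b), a geodesic minimises hyperbolic length in its homotopy class, so for any $\gamma \sim g$ the longest subpiece of $\gamma$ connecting $\partial C(z_0)$ and $\partial C(\infty)$ can be closed up into a leg homotopic to $g$ by attaching short arcs in the closures of $C(z_0)$ and $C(\infty)$; since the non-cusp parts of these boundary sets have bounded hyperbolic diameter in $U$, these attaching arcs contribute at most a constant $P$, and stripping off the end-tails of $g$ yields $\tilde\ell_U(g) \le \tilde\ell_U(\gamma) + P$. For part (c), I define $\gamma_1$ by replacing each maximal sub-arc of $\L(g)$ that lies in $f^{-1}(U\setminus Y)$ with a short detour within the corresponding component of $U\setminus Y$, using the bounded geometry of these finitely many components away from their cusps. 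The detours together contribute an additive constant $M$, and what survives of $\gamma_1$ between $\partial C(z_0)$ and $\partial C(\infty)$ has hyperbolic length at most $\ell_U(\L(g)\cap f^{-1}(Y)) + M$, as required.
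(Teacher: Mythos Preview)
Your construction and the arguments for (a) and (b) are essentially the paper's when $z_0$ is a puncture of $U$: remove horoballs at the punctures (chosen via Lemma~\ref{jorg}, and at $z_0$ additionally via Proposition~\ref{lindisk}) and closed-geodesic collars around the remaining complementary components. For (c) in that case the paper does not use detours; the point of Proposition~\ref{lindisk} is precisely that the inverse branch fixing $z_0$ sends the horoball $C(z_0)$ into itself, so the initial sub-arc of $\L(g)$ already lies in $C(z_0)$ and the only discrepancy to bound is at the $\infty$-end, where $\L(g)$ enters a tract before it leaves $Y$; that overlap is bounded by a compactness argument on the finitely many tract boundaries meeting $Y$.

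The genuine gap is the parabolic case, when $z_0$ is not a puncture but sits on a non-trivial component $K_0$ of $\C\setminus U$. Your plan surrounds $K_0$ by a single closed geodesic $\alpha_0$, so that $C(z_0)$ is the whole funnel $\hat\alpha_0$. Part (a) can still be made to work, but your argument for (c) does not go through: there is no analogue of Proposition~\ref{lindisk} here, and the local inverse branch at $z_0$ is not even single-valued on the doubly-connected funnel $\hat\alpha_0$, so you cannot conclude that the initial sub-arc of $\L(g)$ in $f^{-1}(\hat\alpha_0)$ stays inside $C(z_0)$. That sub-arc has infinite $U$-length (it reaches the boundary point $z_0$), so your ``short detour within the corresponding component of $U\setminus Y$'' is not available, and you would instead have to bound the portion of this arc that protrudes past $\alpha_0$ into $Y$ while preserving the homotopy class of $\L(g)$; nothing in your outline does this. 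The paper avoids the difficulty by invoking condition~(d) of Definition~\ref{dfn_adm_exp_dom}: it separates the finitely many components $K_0^j$ of $K_0\setminus\{z_0\}$ by geodesic arcs $\beta^j$ landing at $z_0$, and caps each resulting access sector by a short geodesic arc $\lambda_j$, so that near $z_0$ the removed region consists of small \emph{simply-connected} sectors $\Lambda_j$. For (c) one then replaces the initial piece of $\L(g)$ by the geodesic of the simply-connected preimage $\tilde\Lambda_j$ joining the same endpoints, and a compactness argument over the finitely many arcs $\lambda_j$ bounds the protrusion uniformly. Without this refined construction at the parabolic point your proof of (c) is incomplete.
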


\begin{proof}
Let $p_0,\dots ,p_n$ be the punctures of $U$ including $\infty$, so let us assume that 
$p_n=\infty$. For every $i=0,\dots n$ 
choose a sufficiently small horosphere $H_{\delta_i}(p_i)$ 
which satisfies the conclusion of Proposition \ref{jorg}, and such that 
$\overline{H_{\delta_i}(p_i)}\cap \overline{H_{\delta_j}(p_j)}$ whenever $i\neq j$. 
Recall from Section \ref{subs_tracts} that
$f^{-1}(H_{\delta_n}(\infty))$ is a countable 
union of tracts $T_i$, and any compact subset of $U$ can intersect
only finitely many tracts. 
If $z_0$ is one of the punctures, say $z_0=p_0$, 
we also require that $H_{\delta_0}(z_0)$ 
satisfies the conclusion of Proposition \ref{lindisk}.
Define
\begin{align*}
Y_1:=U\setminus\bigcup_{i=0}^n H_{\delta_i}(p_i).
\end{align*}
\vspace{.2cm}
Case I: \emph{$\C\setminus U$ is finite}
\vspace{.2cm}
 
Let $Y:=Y_1$. Clearly,  
$Y$ is a compact path-connected set with finitely 
many boundary components. 

$(a)$: If $g$ is a geodesic leg, 
then $g$ does not intersect $h_{\delta_i}(p_i)$ for all $1\leq i\leq n-1$, 
while it intersects $h_{\delta_0}(z_0)$ and $h_{\delta_n}(\infty)$ exactly once 
\cite[Proposition 3.3.9]{hubbard},
so in particular $g\cap Y$ is non-empty and connected. Since every puncture of $U$ belongs 
to a unique component of $\widehat{\C}\setminus Y$, statement $(a)$ follows.

$(b)$: Observe that among all curves in a given homotopy class 
which connect the two horocycles $h_{\delta_0}(z_0)$ and $h_{\delta_n}(\infty)$, 
the unique geodesic realizes the 
smallest distance, hence the claim follows with $P=0$. 

$(c)$: If $g$ is any geodesic leg, then, by $(a)$, $g$ intersects 
$\partial C(z_0)$ and $\partial C(\infty)$ exactly once, while it does not 
enter any other horosphere. 
Also recall that by Proposition \ref{lindisk}, 
the inverse branch of $f$ that maps $z_0$ to itself 
maps $C(z_0)$ into itself. Hence the only components 
of $Y\setminus f^{-1}(Y)$ that might have non-empty intersection with 
$\L(g)\cap Y$ are domains that arise as intersection 
of $U\setminus H_{\delta_n}(\infty)$ and a tract $T$   
(a component of $f^{-1}(H_{\delta_n}(\infty))$), that 
eventually contains $\L(g)$.
Observe that 
$\L(g)$ intersects $\partial T$ in exactly one point, say $w_0=\L(g)(t_0)$,
while it is possible that $\L(g)$ has more than one intersection 
point with $h_{\delta_n}(\infty)$ lying in $T$ 
(see Figure \ref{bild5}(a)). Let $w_1=\L(g)(t_1)$ be the last intersection 
point of $\L(g)$ and $h_{\delta_n}(\infty)$. 

If $t_0\geq t_1$, then the longest subpiece of $\L(g)$ connecting 
$\partial C(z_0)$ and $h_{\delta_n}(\infty)$ is itself a subpiece 
of $\L(g)\cap f^{-1}(Y)$ and the claim follows with $\gamma_1=\L(g)$ and $M=0$.  

Otherwise, let $\L^{'}(g)$ denote the subpiece of $\L(g)$
connecting $w_0$ and $w_1$. Clearly, $\L^{'}(g)\subset T$. 
Since $g\vert_{H_{\delta_n}(\infty)}$ is a geodesic in
$H_{\delta_n}(\infty)$, it follows 
that $\L(g)\vert_T$ is a geodesic in $T$, hence 
$\L^{'}(g)$ is a subpiece of a geodesic in $T$ connecting
$w_0$ to $\infty$.
Recall that $\partial T$ is an analytic curve, hence $\ell_U(\L^{'}(g))$
depends continuously on the point $w_0$. Furthermore, the set of those points
$w\in\partial T$ for which the geodesic in $T$ from $w$ to $\infty$ 
intersects $h_{\delta_n}(\infty)$ is a compact subset of $U$. 
Together with the fact that only finitely many tracts intersect the 
set $Y$, this implies that there is a finite number $M$ such that 
$\ell_U(\L^{'}(g))\leq M$. 
Hence $\tilde{\ell}_U(\L(g))\leq\ell_U(\L(g)\cap f^{-1}(Y)) +M$.

\begin{figure}
\centering
\begin{minipage}{.49\linewidth}
\raggedleft
\framebox(200,150){
\psfrag{Lg}{$\L(g)$}
\psfrag{w0}{$w_0$}
\psfrag{w1}{$w_1$}
\psfrag{znull}{$z_0$}
\psfrag{horo}{$h_{\delta_n}(\infty)$}
\psfrag{T}{$T$}
\includegraphics[width=.94\linewidth]{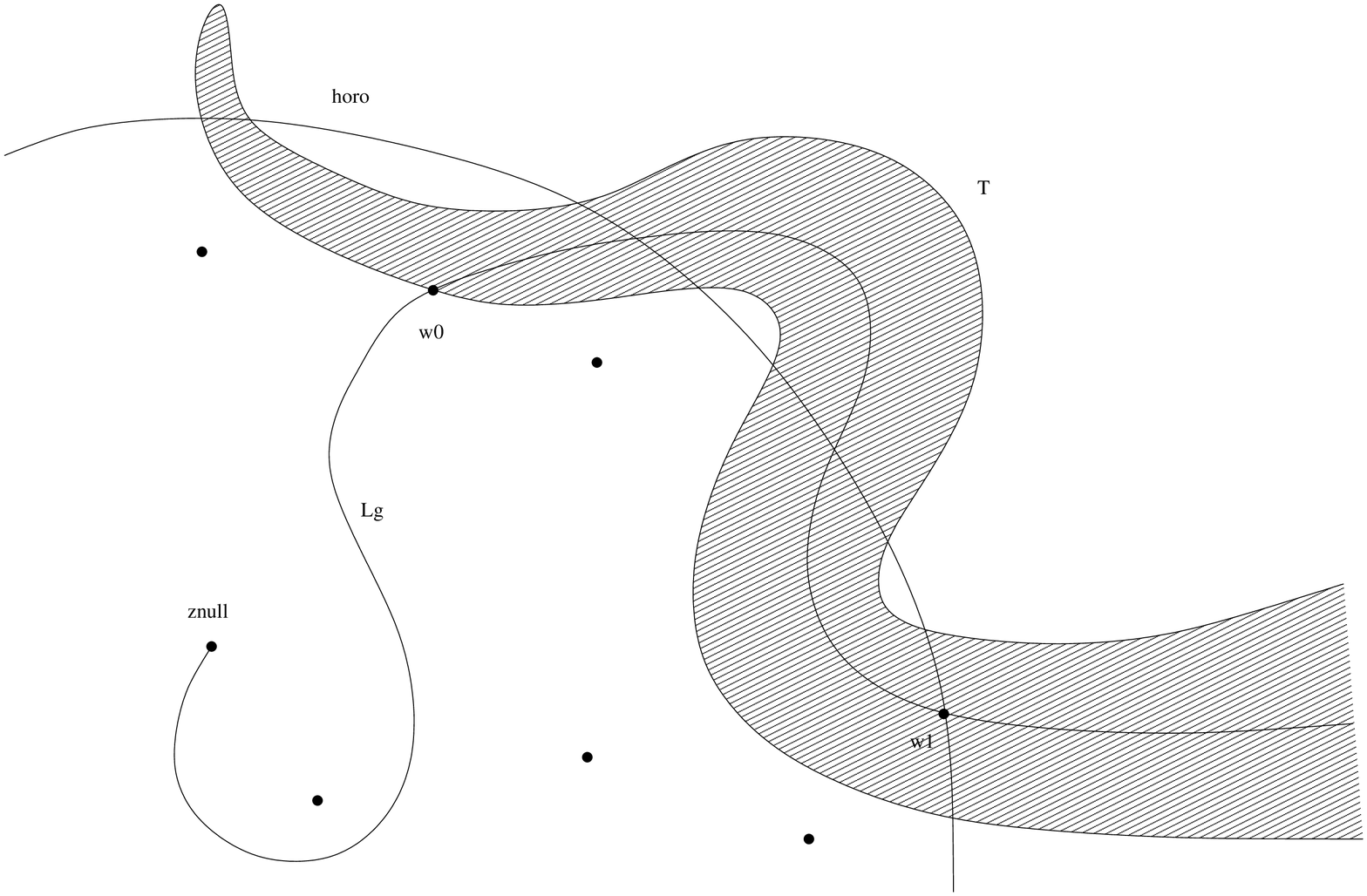}
}
\end{minipage}
\begin{minipage}{.49\linewidth}
\raggedright
\framebox(200,150){
\psfrag{wi}{$z_0$}
\psfrag{Ki1}{$K_0^1$}
\psfrag{Ki2}{$K_0^2$}
\psfrag{Ki3}{$K_0^3$}
\psfrag{Ji1}{$J_0^1$}
\psfrag{Ji2}{$J_0^2$}
\psfrag{Ji3}{$J_0^3$}
\includegraphics[width=.85\linewidth]{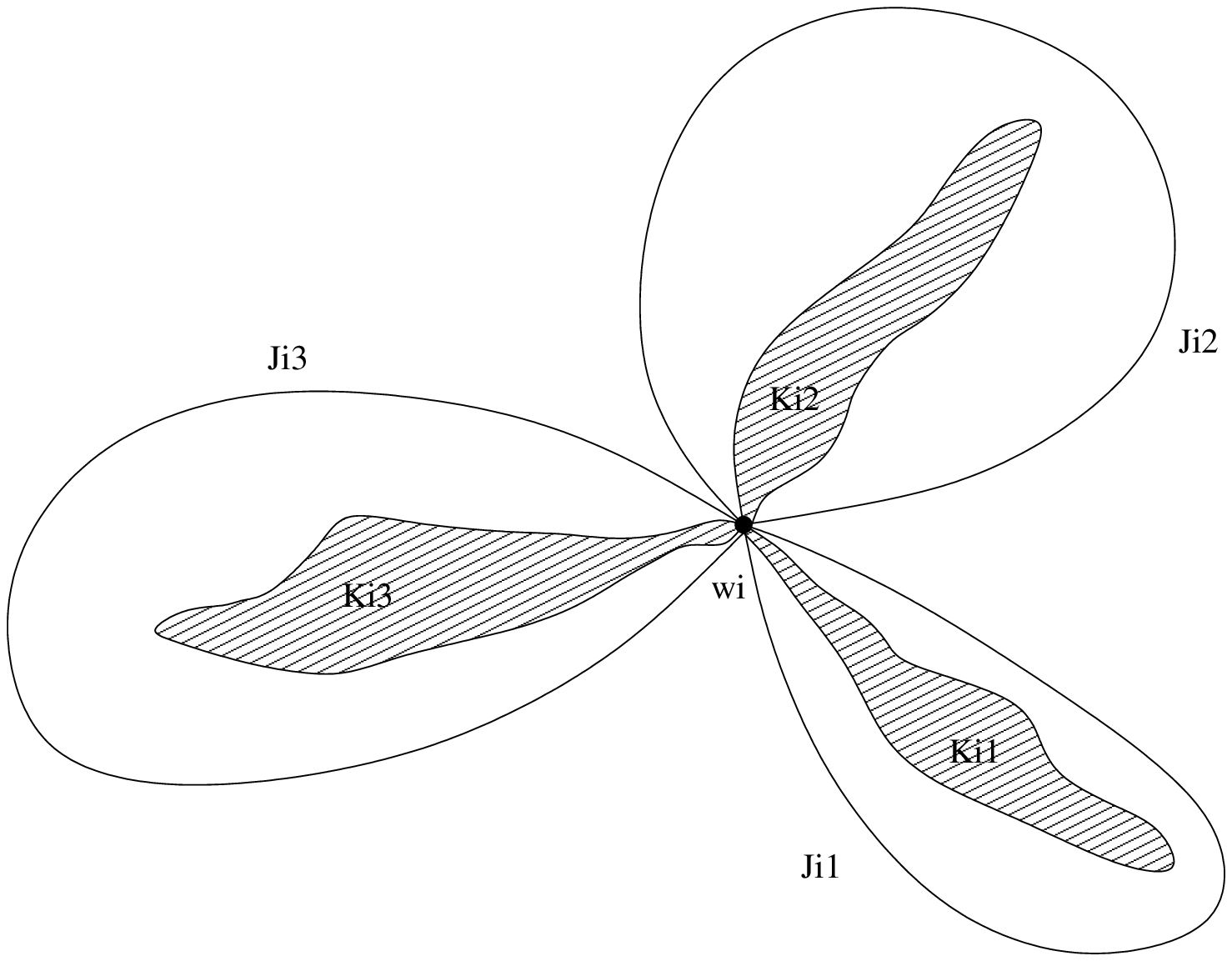}
}
\end{minipage}
\caption{(a): The image $\L(g)$ of a geodesic leg $g$: $\L(g)$ intersects $\partial T$ 
exactly once, while it can have more than one intersection point with $h_{\delta_n}(\infty)$,  
(b): The point $z_0$ is a non-isolated boundary point contained in the component 
$K_0$ of $\C\setminus U$: 
the separating curves $J_0^j$ intersect only in $z_0$.}
\label{bild5}
\end{figure}

\vspace{.2cm}
Case II: \emph{$\C\setminus U$ is infinite}
 \vspace{.2cm}

It follows from Definition \ref{dfn_adm_exp_dom}$(a),(c)$ 
that $\C\setminus U$ is a finite union 
of compact sets. Let $K_1,\dots K_m$ be those components of $\C\setminus U$ 
that are not punctures of $U$. 

Now let $K_i$ be a component such that $K_i\cap\{ z_0\} =\emptyset$.
By the Plane Separation Theorem \cite[Chapter VI, Theorem 3.1]{whyburn}
there exists a simple closed curve $J_i$, entirely contained in $U$, 
which separates $K_i$ from any other component of $\widehat{\C}\setminus U$. 
By \cite[Proposition 3.3.8, Proposition 3.3.9]{hubbard}, 
there is a unique geodesic $\alpha_i$  
which is a simple closed curve homotopic to $J_i$. 
Denote by $\hat{\alpha}_i$ the component of $U\backslash \alpha_i$
whose boundary consists of $\alpha_i\cup \partial K_i$. 

Let 
\begin{align*}
Y_2:=Y_1\setminus\bigcup_{i=0}^m \hat{\alpha}_i,
\end{align*}
where the union is taken over 
all components $K_i$ of $\C\setminus U$ such that 
$K_i\cap\{ z_0\}=\emptyset$. 
By \cite[Proposition 3.3.9]{hubbard} any 
two geodesics $\alpha_i\neq\alpha_j$ are disjoint 
and by our initial choice, 
any two horocycles $h_{\delta_j}(p_j)$ or geodesics $\alpha_i$ are 
disjoint as well. 
Hence the obtained set $Y_2$ is a subset of $U$ with finitely many 
boundary components, each of which is either a horocycle $h_{\delta_j}(p_j)$ or a simple closed 
geodesic $\alpha_i$. 

\vspace{.2cm}
Case IIa: \emph{$z_0$ is a puncture of $U$}
\vspace{.2cm}

Define $Y:=Y_2$. By construction,
$Y$ is a compact and path-connected set with finitely many boundary components. 

$(a)$: Let $g$ be a geodesic leg. Since the boundary of $Y$ consists 
of geodesics and horocycles and since $g$ intersects only the horocycles 
at $z_0$ and $\infty$, it follows that  $g\cap Y$ is connected. Furthermore, 
it follows from the previous construction that every component of $\partial Y$ 
surrounds exactly one component of $\partial U$, hence $(a)$ follows. 

$(b)$-$(c)$: These statements follow by exactly the same arguments 
as in case I.

\vspace{.2cm}
Case IIb: \emph{$z_0$ is not a puncture of $U$}
\vspace{.2cm}

Let $K_0$ be the component of $\C\setminus U$ that contains $z_0$. By 
Definition \ref{dfn_adm_exp_dom}$(d)$, $K_0\setminus\{ z_0\}$ has 
finitely many components, say $K_0^1,\dots ,K_0^l$. 
It follows from the Plane Separation Theorem that for every $j=1,\dots l$ 
there is a simple closed curve $J_0 ^j\subset U\cup\{ z_0\}$ 
that separates $K_0^j$ from 
every component of $\widehat{\C}\setminus U$ other than $K_0$, 
as well as from every $K_0^i$, where $i\neq j$. 
Furthermore, $J_0^j\cap\partial U =\{z_0\}$ 
(see Figure \ref{bild5}(b)). 
Each $J_0^j$ is homotopic relative the start- and endpoint $z_0$ 
to a unique geodesic $\beta^j$ in $U$, 
and any two such geodesics 
$\beta^j$ and $\beta^k$ intersect only in $z_0$.
For every $j=1,\dots ,l$ let $\hat{\beta}^j$ be the component of 
$U\setminus (\beta^j\cup\{ z_0\})$
bounded by $\partial K_0^j$, $\beta^j$ and $\{ z_0\}$ 
(see Figure \ref{bild6}(a)), and let 
\begin{align*}
Y_3=Y_2\backslash\bigcup_{j=1} ^l \hat{\beta}^j. 
\end{align*}
 
It follows that $Y_3\cap \partial U=\{ z_0\}$ and that $z_0$ is accessible through 
exactly $l$ sectors, each of which lies between two geodesics $\beta^j$ and $\beta^{j+1}$ 
(modulo $l$). 

Let $\lambda_j$, $j=1,\dots ,l$, be a collection of simple geodesic arcs, 
each of which connects a point in $\beta^j$ to a point in $\beta^{j+1}$, such that
 the domain $\Lambda_j$ bounded by $\lambda_j$, $\{ z_0\}$, $\beta^j$ and $\beta^{j+1}$ 
is a simply-connected subdomain of $U$ (see Figure \ref{bild6}(b)). Define 
\begin{align*}
 Y:=Y_3\backslash\bigcup_{j=1}^l \Lambda_i.
\end{align*}

$(a)$: The statement follows by exactly the 
same arguments as in case IIa. 

$(b)$: Let $C(z_0)$ denote the unique component of $\C\backslash Y$ 
that contains $z_0$ and set $P=\ell_U(\partial C(z_0))+\ell_U(h_{\delta_n}(\infty))$. 
Now, $g\cap Y$ does not necessarily realize the shortest distance 
between $\partial C(z_0)$ and $h_{\delta_n}(\infty)$ in its homotopy class;
still, if $\gamma$ is a leg in $[g]$, then there exists 
a component $\gamma^{'}$ of $\gamma\cap Y$ which is homotopic to $g\cap Y$
relative $\partial C(z_0)\cup h_{\delta_n}(\infty)$ and we obtain   
$\tilde{\ell}_U(g)=\ell_U(g\cap Y)\leq\ell_U(\gamma^{'})+ P\leq\tilde{\ell}_U(\gamma)+P$.

$(c)$: Recall that there is a unique $j\in\{ 1,\dots ,l\}$ such that 
$g$ itersects $\lambda_j$; their intersection point, say $s$, is unique 
and the piece of $g$ connecting $z_0$ and $s$ is entirely 
contained in $\Lambda_j$. 
Let $\tilde{\Lambda}_j$ be the component of $f^{-1}(\Lambda_j)$
such that $\L(g)$ intersects $\partial\tilde{\Lambda}_j$ and let $\tilde{s}$ be 
their unique intersection point. Observe that the piece of $\L(g)$ that connects 
$z_0$ and $\tilde{s}$ is entirely contained in $\tilde{\Lambda}_j$.

If $\L(g)\cap\tilde{\Lambda}_j\cap (U\setminus C(z_0))=\emptyset$, 
then the situation is reduced to the
previous case and we can choose $\gamma_1=\L(g)$.

Otherwise, we replace the subpiece of $\L(g)$ that connects $z_0$ and 
$\tilde{s}$ by the unique homotopic geodesic $\zeta$ of the hyperbolic domain 
$\tilde{\Lambda}_j$ connecting $z_0$ and $\tilde{s}$. 
If $\tilde{s}\in C(z_0)$, then we are only interested in  
the longest piece of $\zeta$ connecting two points in $\partial C(z_0)$, and 
otherwise in the longest piece of $\zeta$ 
that connects $\partial C(z_0)$ and $\tilde{s}$.
Again, by continuity and compactness arguments (as in the case of intersections 
with tracts), it follows that the length in $U$ 
of every such piece is globally bounded. 
The claim now follows from 
the fact that there are only 
finitely many domains $\tilde{\Lambda}_j$.
 
\end{proof}

\begin{figure}
\centering
\begin{minipage}{.49\linewidth}
\raggedleft
\framebox(200,150){
\psfrag{w0}{$w_0$}
\psfrag{K01}{$K_0^1$}
\psfrag{K02}{$K_0^2$}
\psfrag{K03}{$K_0^3$}
\psfrag{beta01}{$\beta^1$}
\psfrag{beta02}{$\beta^2$}
\psfrag{beta03}{$\beta^3$}
\psfrag{betahat01}{$\hat{\beta}^1$}
\includegraphics[width=.72\linewidth]{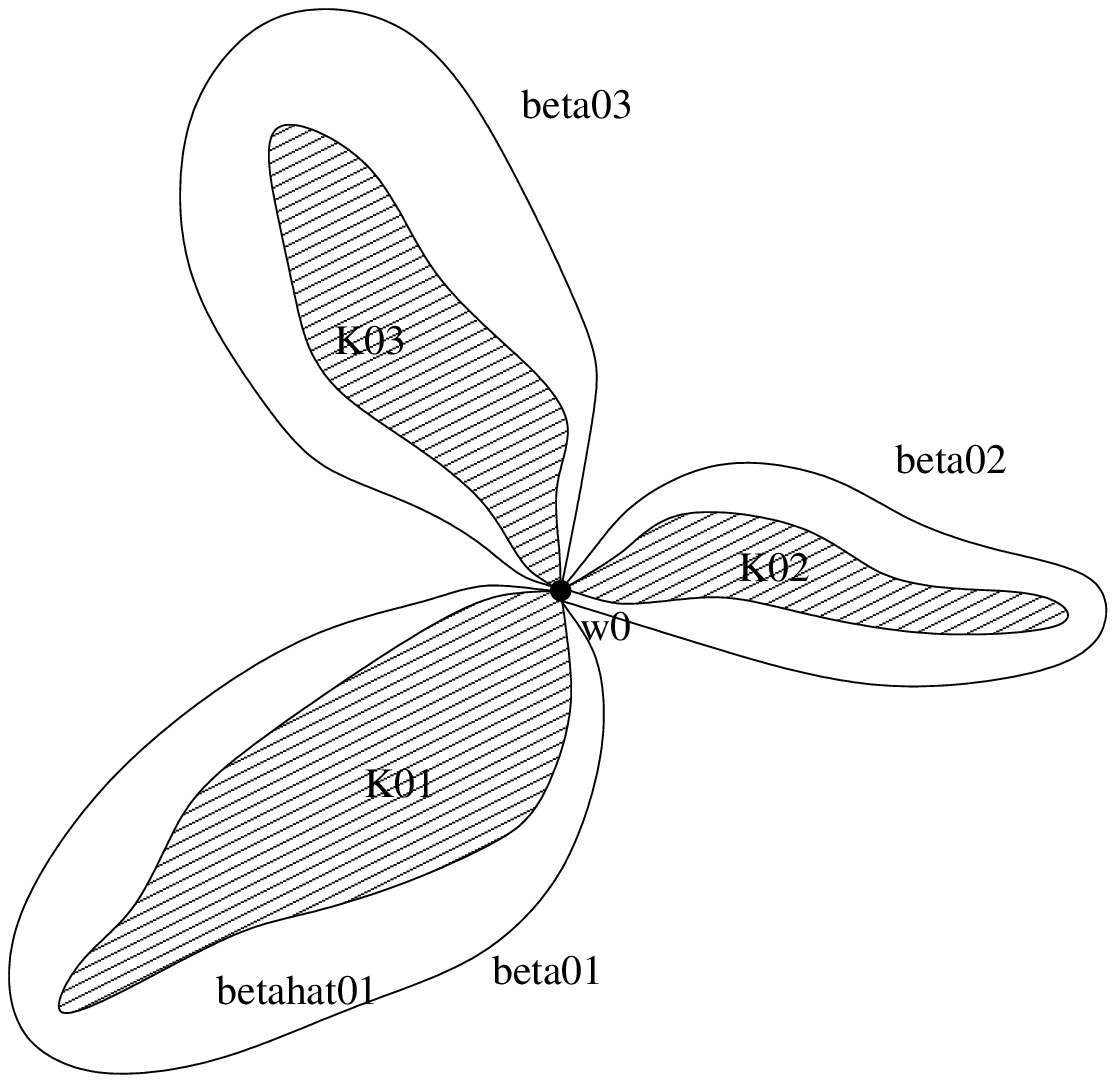}
}
\end{minipage}
\begin{minipage}{.49\linewidth}
\raggedright
\framebox(200,150){
\psfrag{lambda1}{$\lambda_1$}
\psfrag{lambda2}{$\lambda_2$}
\psfrag{beta1}{$\beta^1$}
\psfrag{beta2}{$\beta^2$}
\includegraphics[width=.9\linewidth]{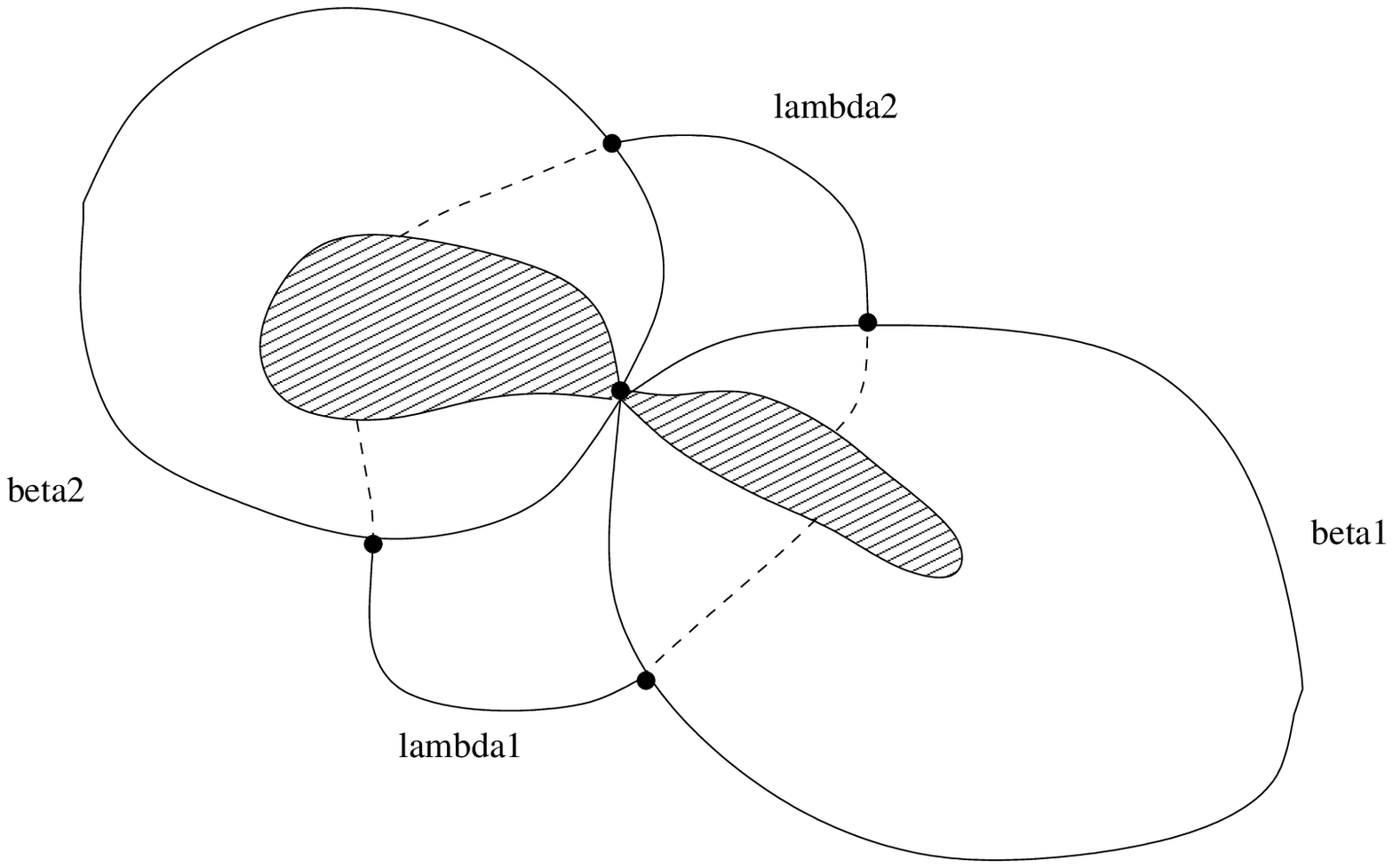}
}
\end{minipage}
\caption{(a): Every region $\hat{\beta}^j$ is bounded by $\beta^j\cup\partial K_0^j\cup\{ z_0\}$,  
(b): geodesics $\lambda_1$ and $\lambda_2$, one for each sector through 
which $z_0$ is accessible.}
\label{bild6}
\end{figure}

\begin{proof}[of Theorem \ref{thm_finite}]
Let $Y$ be a compact set that satisfies the conclusions of Theorem \ref{thm_Y}.
By Proposition \ref{prop_Y} there exists a constant $\eta<1$
such that $\rho_U(z)<\eta\cdot \rho_V(z)$ holds for all $z\in f^{-1}(Y)$. 
Hence if $c\subset f^{-1}(Y)$ is any rectifiable curve,
then $\ell_U(c)<\eta\cdot\ell_U(f(c))$. 

Let $g$ be a geodesic leg. By Theorem \ref{thm_Y}$(c)$ there exists 
a universal constant $M>0$ and
a leg $\gamma_1\in [\L(g)]$ such that 
$\tilde{\ell}_U(\gamma_1)\leq\ell_U(\L(g)\cap f^{-1}(Y))+M$. 
Together with the uniform contraction this yields the estimate
\begin{align*}
\tilde{\ell}_U(\gamma_1)\leq \ell_U(\L(g)\cap f^{-1}(Y)) + M <\eta\cdot\ell_U(g\cap Y) +M.
\end{align*}
Let $g_1\in [\L(g)]$ be a geodesic leg. By Theorem \ref{thm_Y}$(a)$, $g_1\cap Y$ 
is connected, hence $\tilde{\ell}_U(g_1)=\ell_U(g_1\cap Y)$. It then follows 
from Theorem \ref{thm_Y}$(b)$ 
that there exists a universal constant $P>0$ such that
$\tilde{\ell}_U(g_1)\leq\tilde{\ell}_U(\gamma_1)+P$. Altogether, we obtain
\begin{align*}
\ell_U(g_1\cap Y) = \tilde{\ell}_U(g_1)
\leq \tilde{\ell}_U(\gamma_1) +P \leq\eta\cdot\ell_U(g\cap Y) +M +P.
\end{align*}
By proceeding inductively it follows
that if $g_n\in [\L^n(g)]$ is the geodesic leg, then 
\begin{align*}
\ell_U(g_n \cap Y)<\eta ^n\cdot \ell_U(g\cap Y) + (P+M)\cdot\sum_{i=0} ^{n-1} \eta^i. 
\end{align*}
So in particular, if $L>\frac{P+M}{1-\eta}$ and $\ell_U(g\cap Y)<L$,  
then $\ell_U(g_n\cap Y)<L$.
Recall that by Theorem \ref{thm_Y} 
every component of $\widehat{\C}\setminus Y$ contains exactly 
one component of $\widehat{\C}\setminus U$, hence 
there can be only finitely many geodesic legs with globally bounded 
length.  
So for all $n\in\N$, the geodesic legs $g_n$ belong to only finitely many 
equivalence classes.
\end{proof}

\subsection{Admissible expansion domains of geometrically finite maps}
\label{subs_aed_gf}

We will now show that geometrically
finite maps have admissible expansion domains. 
Such maps provide us with many examples to which our main result applies. 
Still, there are functions that admit expansion domains but are not geometrically finite;
an example will be given at the end of this section.

\begin{Proposition}
\label{prop_U}
Let $f$ be a geometrically finite map and let $z_0$ be an arbitrary but fixed 
repelling or parabolic fixed point of $f$. Then $f$ has 
an admissible expansion domain $U$ at $z_0$.
Furthermore, if $f$ is postsingularly finite, then $U$ can be chosen such that 
$\C\setminus U$ is finite.
\end{Proposition}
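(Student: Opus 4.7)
The plan is to construct $U$ explicitly as the complement of a suitable closed, $f$-forward-invariant set. I will handle the postsingularly finite case first, then extend the construction to the general geometrically finite case.

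\textbf{Postsingularly finite case.} Since every parabolic cycle forces $P(f)$ to be infinite, $f$ has no parabolic cycles, and $z_0$ is necessarily repelling. A natural first attempt is $U:=\C\setminus(P(f)\cup\{z_0\})$; its complement is forward-invariant because $f(P(f))\subset P(f)$ and $f(z_0)=z_0$, which yields (b). Condition (a) is immediate (the complement is a finite bounded set, so $\infty$ is isolated), (c) follows since $\C\setminus U$ is a finite set of points, and (d) holds trivially because $z_0$ is isolated in $\C\setminus U$, so $K_0=\{z_0\}$. The strict inclusion in (b) uses that $z_0$ has preimages different from itself (by Lemma \ref{lem_U_hyp}, $z_0$ is not an omitted value, and in fact has infinitely many preimages); such a preimage lies in $U\setminus f^{-1}(U)$. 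The only snag is if $z_0\in P(f)$, which makes $U=\C\setminus P(f)$, violating (c). In that case I fix things by choosing any preimage $w\notin P(f)\cup\{z_0\}$ of $z_0$ (such $w$ exists by the same infinite-preimages argument) and setting $U:=\C\setminus(P(f)\cup\{w\})$. Forward invariance of the complement still holds because $f(w)=z_0\in P(f)$, and $w\in(\C\setminus P(f))\setminus U$ witnesses condition (c).

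\textbf{General geometrically finite case.} Here $P(f)$ may be infinite, but by Proposition \ref{poss_F_comp} the Fatou set is a finite union of attracting and parabolic basins, and $P_\J$ is finite. I will build a closed set $E\supset P(f)\cup\{z_0\}$ whose complement in $\C$ is a finitely-connected domain and which is $f$-forward-invariant. For each attracting cycle, take Koenigs linearization discs around each periodic point, enlarge them cyclically so their union $N^{\mathrm{att}}$ is $f$-forward-invariant and absorbs all but finitely many iterates of the singular values in that basin. For each parabolic cycle, use the Leau-Fatou flower theorem to choose closed attracting petals $N^{\mathrm{par}}$ whose union over the cycle is $f$-forward-invariant and absorbs all but finitely many such iterates. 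The finitely many orphaned postsingular points lying outside these neighborhoods, together with the finite set $P_\J$ and (if necessary) $\{z_0\}$, are added to $E$ as isolated points; their forward orbits eventually enter the neighborhoods, so $E$ stays forward-invariant. Setting $U:=\C\setminus E$, I verify (a)--(d): boundedness of $P(f)$ (the Remark after Proposition \ref{poss_F_comp}) gives isolation of $\infty$; finite connectivity follows from the finiteness of the collection of cycles; strictness in (b) is again supplied by preimages of $z_0$; and $U\neq\C\setminus P(f)$ because the Fatou neighborhoods contain open sets disjoint from $P(f)$.

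The main obstacles I anticipate are twofold. First, producing genuinely \emph{$f$-forward-invariant} neighborhoods (not merely $f^p$-forward-invariant for the cycle's period $p$) at parabolic cycles, since $f$ permutes the petals around the cycle and simultaneously one must absorb orbits of all relevant singular values; this will require choosing the petals of uniform ``size'' across the cycle and shrinking if necessary. Second, verifying condition (d) when $z_0$ is a parabolic fixed point: then $K_0$ equals $\{z_0\}$ together with the closed attracting petals at $z_0$, and I must check that $z_0$ remains accessible from $U$ along the repelling sectors and that $K_0\setminus\{z_0\}$ has only finitely many components. Both assertions follow directly from the Leau-Fatou flower theorem, which gives precisely finitely many attracting petals meeting only at $z_0$ and separated by repelling sectors contained in $U$. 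Connectedness of $U$ is then automatic: $E$ is a finite disjoint union of bounded compact connected sets, and the complement in $\C$ of such a union is connected. This completes the proposed construction in both cases.
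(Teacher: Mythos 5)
Your construction follows the same broad strategy as the paper: in the postsingularly finite case, take $U$ to be the complement of $P(f)\cup\{z_0\}$ plus (if necessary) one extra point; in the general geometrically finite case, also excise a compact forward-invariant set built from attracting linearizing neighbourhoods and parabolic petals. However, there are three genuine gaps. First, in the case $z_0\in P(f)$ you remove a preimage $w\notin P(f)\cup\{z_0\}$ of $z_0$, asserting its existence ``by the same infinite-preimages argument''. That argument relied on $z_0\notin P(f)$: a Picard exceptional value is an asymptotic value and hence lies in $S(f)\subset P(f)$, so a fixed point \emph{outside} $P(f)$ cannot be Picard exceptional. Once $z_0\in P(f)$ this reasoning collapses, and $z_0$ may well be Picard exceptional with only finitely many preimages, all of them in $P(f)$ (consider a map of the shape $\mu z\e^{z^2}$). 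The paper sidesteps this by removing instead a repelling fixed point $w\neq z_0$ with $w\notin P(f)$, whose existence is guaranteed because class-$\B$ functions have infinitely many repelling fixed points while $P(f)$ is finite.

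Second, in the general case you add ``the finitely many orphaned postsingular points'' to $E$ as isolated points. But $S(f)\cap\F(f)$ is only assumed compact, not finite, so the part of $P_\F$ not yet absorbed into the linearizing neighbourhoods and petals can be an infinite compact set; it cannot be handled as a finite collection of punctures and would, if left as-is, give $U$ infinitely many boundary components. The paper instead covers $S(f)\cap\F(f)$ by finitely many Euclidean disks $D(z_1),\dots,D(z_n)$, chooses $n_a$ with $f^{n_a}(\bigcup D(z_i))$ contained in the absorbing neighbourhood, and includes the closures of the finitely many iterates $f^j(\bigcup D(z_i))$, $j<n_a$. Third, your argument for connectedness of $U$ rests on the claim that ``the complement in $\C$ of a finite disjoint union of bounded compact connected sets is connected'', which is false in general (the circle $S^1$ is a counterexample); one needs each component of the excised set to be \emph{full}. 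The paper handles this explicitly by adjoining the bounded complementary components of $K_a\cup K_p$ to form a full set $K$. Each of these defects can be repaired, but as written the construction does not close.
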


\begin{proof}
Case I: \emph{$P(f)$ is finite}
\vspace{.2cm}

Recall that $f$ cannot have any parabolic cycles. 
If $z_0\notin P(f)$, then define $U:=\C\setminus (P(f)\cup\{z_0\})$. 
Otherwise, since every function in class $\B$ has 
infinitely many repelling fixed points \cite{langley}, 
there is a repelling fixed point $w$ of $f$ that belongs to $\C\setminus P(f)$.
In this case define $U:=\C\setminus (P(f)\cup\{ z_0, w\})$.
In both cases there is a point in $\C\setminus (U\cup P(f))$ that has 
preimages arbitrarily close to $\infty$.

The set $U$ is open, connected and finitely-connected,
$\infty$ is an isolated boundary point and $\C\setminus U$ 
is a finite union of points. Since $f(P(f))\subset P(f)$, 
it follows that $f^{-1}(U)\subset U$. 
But the set $f^{-1}(\C\setminus U)$ is not compact, hence it follows that  
$f^{-1}(P(f))\neq P(f)$ and $f^{-1}(U)\subsetneq U$. 
Furthermore, $z_0$ is an isolated boundary point
and hence accessible from $U$.

\vspace{.2cm}
Case II: \emph{$P(f)$ is infinite}
\vspace{.2cm}

Recall that in this case $\F(f)\neq\emptyset$.
\begin{claim*} There exists a full set $K$ whose boundary has only 
finitely many components such that $P_{\F}\subsetneq K\subset (\F(f)\cup\Par(f))$ 
and $f(K)\subsetneq K$.
\end{claim*}
\begin{proof of claim}
Denote by $S_a$ the set of those singular values of 
$f$ that are attracted by a cycle in $\Attr(f)$. 
For every point in $\Attr(f)$ we choose a linearising neighbourhood; 
let $A$ be their union. 
So $A$ is a finite union of Jordan domains and it satisfies
$\Attr(f)\subset A$ and $f(\overline{A})\subset A$. Since $S_a$ 
is a compact subset of the Fatou set, we have that $d_a:=\dist(S_a,\J(f))>0$. 
For each point $z\in S_a$ we pick a Euclidean disk $D(z)$ 
of radius less than $d_a/2$ centered at $z$. 
The union of these disks forms an open cover of $S_a$ 
and since the set is compact, there is a subcover 
consisting of finitely many disks, say $D(z_1),\dots ,D(z_n)$.
Let $C_a=\cup_{i=1}^n D(z_i)$. 
Each of the disks $D(z_i)$ is mapped into the set $A$ after 
finitely many iterations, hence there is an integer $n_a$ 
such that $f^{n_a}(C_a)\subset A$. Define $K_a$ to be the 
union of the sets $\overline{A}$ and 
$\overline{\bigcup_{j=0}^{n_a-1} f^j (C_a)}$. 

Denote by $S_p$ the set of those singular values of 
$f$ that are attracted by a cycle in $\Par(f)$ and let $q\in\Par(f)$. 
For simplicity let us assume that $q$ is a fixed point of multiplicity $n+1$,
where $n>0$; the periodic case is analogous. 
By the Parabolic Flower Theorem \cite[Theorem 10.7]{milnor} 
there are $n$ (attracting) petals of arbitrarily small diameter
attached to $q$. Let $A_1,\dots ,A_n$ be such a collection 
of petals at $q$. Then 
there is a positive number $\delta$ so that 
if $\vert f^n(z)-q\vert\leq\delta$ holds for all $n$, 
then $z$ is contained in some $A_i$ \cite[Chapter $1$, $\S 3.3$]{eremenko2}. 
Denote by $S_p ^q$ the set of all points in $S_p$ that converge 
to $q$ under iteration. 
For each point $z\in S_p ^q$ we choose a 
Euclidean disk $D(z)$ centered at $z$ with sufficiently small radius 
such that its closure is contained in the Fatou set.
As before, the union of these disks is a cover of $S_p ^q$ and 
it has a subcover consisting of finitely many disks, 
say $D(z_1),\dots ,D(z_m)$. 
Let $C_q:=\cup_{i=1}^m D(z_m)$. 
There is an integer $n_q$ such that $f^{n}(C_q)$ is contained in the 
$\delta$-neighbourhood of $q$ for all $n\geq n_q$. 
We now define the set $K^q$ to be the union of
$\bigcup_{i=1}^n \overline{A_i}$ and $\overline{\bigcup_{j=0}^{n_q-1} f^j(C_q)}$, 
and the set $K_p$ 
to be the union of all sets $K^q$, where $q\in\Par(f)$.

Observe that $\C\backslash (K_a\cup K_p)$ is not necessarily connected. 
Define $K$ to be the union of the set $K_a\cup K_p$ and the 
bounded components of its complement. 
Since $f(K_a\cup K_p)\subset (K_a\cup K_p)$, 
it follows that $f(\partial K)\subset K$. 
Let $K^0$ be a component of the interior of $K$. 
Then $K^0$ and $f(K^0)$ are bounded and so by the Open Mapping Theorem, 
$\partial f(K^0)\subset f(\partial K^0)\subset K$. Hence $f(K)\subset K$ 
and by Montel's Theorem, $\{ f^n\vert_{K^0}\}$ is a normal family. 
It follows that the interior of $K$ is contained in the Fatou set. 
Furthermore, it follows from our construction that $\partial K\cap\J(f)=\Par(f)$, 
$f(K)\subsetneq K$ and that $K$ has only finitely many boundary components,
yielding the statement of the claim.
\end{proof of claim}
We define 
\begin{eqnarray}
\label{def_U}
U:=\C\backslash (P(f)\cup \{ z_0\} \cup K).
\end{eqnarray}
Since $U$ is the complement of a full set, it follows 
that $U$ is a domain with $\infty$ as an 
isolated boundary point.
Also, $U\neq \C\setminus P(f)$ since $P_{\F}\subsetneq K$.
Furthermore, $f(P(f)\cup \{ z_0\} \cup K)\subsetneq (P(f)\cup \{ z_0\} \cup K)$, 
hence $f^{-1}(U)\subsetneq U$. Since $\partial K$ has 
finitely many components and since $P_{\J}$ is a finite set, 
it follows that $U$ is finitely-connected.  

If $z_0$ is repelling, then it is a puncture of $U$ and in particular
an accessible boundary point. 
If $z_0$ is parabolic, then it belongs to a non-trivial component $K_0$
of $K$, which is disjoint from the 
repulsion vectors at $z_0$, hence $z_0$ is accessible  \cite[Lemma $10.5$]{milnor}. 
By construction, $K_0$ is a union of finitely many petals at $z_0$, 
hence $K_0\setminus\{z_0\}$ has finitely many components.
\end{proof}

Since every iterate of a geometrically finite map is again geometrically finite,
we immediately obtain the following statement.

\begin{Corollary}
\label{cor_U}
 Let $f$ be a geometrically finite map and let $f^n$ be an iterate of $f$.
Then for every repelling or parabolic fixed point $z_n$ of $f^n$, there is an
admissible expansion domain of $f^n$ at $z_n$. 
\end{Corollary}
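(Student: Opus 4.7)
The plan is to reduce Corollary \ref{cor_U} directly to Proposition \ref{prop_U} by verifying that the hypotheses of the proposition are inherited by iterates of $f$. This should be essentially immediate given what has already been established, so the proposal is short.

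First, I would observe that geometric finiteness is preserved under iteration. This was already noted in Subsection \ref{subs_gf}: since $P(f^n) = P(f)$ and $\F(f^n) = \F(f)$, the definition of geometric finiteness (compactness of $S(\cdot)\cap \F(\cdot)$ and finiteness of $P(\cdot)\cap\J(\cdot)$) immediately passes from $f$ to $f^n$. One small point to check is that $S(f^n)\cap\F(f^n)$ is compact; this uses the standard fact that $S(f^n) = \bigcup_{k=0}^{n-1} f^k(S(f))$, so $S(f^n)\cap\F(f)$ is a finite union of forward images of the compact set $S(f)\cap \F(f)$ under the continuous map $f$, hence compact.

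Next, I would apply Proposition \ref{prop_U} directly to the geometrically finite map $f^n$ and its repelling or parabolic fixed point $z_n$. This produces an admissible expansion domain $U = U(f^n, z_n)$ at $z_n$, proving the corollary. In the postsingularly finite case, the second statement of Proposition \ref{prop_U} also gives that $\C\setminus U$ can be chosen finite.

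I do not anticipate any real obstacle here; the entire content is in recognising that iterates of geometrically finite maps remain geometrically finite and then invoking the previous proposition. The only mildly nontrivial bookkeeping is the remark about $S(f^n)$ being the union of forward images of $S(f)$, which is standard. No new hyperbolic-geometry construction is required, as all the work has already been done inside Proposition \ref{prop_U}.
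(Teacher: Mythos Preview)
Your proposal is correct and matches the paper's approach exactly: the paper simply notes (just before stating the corollary) that every iterate of a geometrically finite map is again geometrically finite, and then Corollary \ref{cor_U} follows immediately from Proposition \ref{prop_U}. Your additional remark about $S(f^n)$ being contained in a finite union of forward images of $S(f)$ is a reasonable elaboration of a point the paper takes for granted.
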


The map 
\begin{align*}
f(z) = \frac{12\pi^2}{5\pi^2 -48}\left( \frac{(\pi^2 -8)z + 2\pi^2}{z (4z-\pi^2)}\cos\sqrt{z} + \frac{2}{z}\right)
\end{align*}
 was introduced in \cite{bergweiler3} as an example of an entire transcendental function 
that has a completely invariant Fatou component $V$, which contains an 
indirect singularity in its boundary.
We will state here some of the properites of $f$; for more details see 
\cite{bergweiler3}.
 
The map $f$ has infinitely many critical values, all of which are 
contained in a closed interval $[0,y]\subset [0,\infty)$, and which
accumulate at the asymptotic value $0$. Furthermore,
$0$ is a parabolic fixed point of multiplier $1$ and $(0,\infty)$ is
contained in its basin of attraction $V$. So in 
particular, $S(f)\cap\F(f)$ is not a compact set and hence $f$ is not 
geometrically finite. 

Since $f$ maps  $[0,\infty)$ into itself and since every singular value
of $f$ converges to $0$, there is a compact intervall $[0,\tilde{y}]$
that contains $P(f)$ and is mapped by $f$ into itself.
It follows that if $z_0$ is any repelling or parabolic fixed point of 
$f$, then the domain 
$U=\C\setminus (\{ z_0\}\cup [0,\tilde{y}])$ is an 
admissible expansion domain of $f$ at $z_0$. 
Moreover, if $z_n$ is a repelling or parabolic fixed point 
of an iterate $f^n$, then 
$U=\C\setminus (\{ z_n\}\cup [0,\tilde{y}])$ is an 
admissible expansion domain of $f^n$ at $z_n$.

More generally, let $f_{\alpha}(z):=\alpha f(z)$.
There exists a real number $\alpha_0>1$ such that for all $1<\alpha<\alpha_0$
the map $f_{\alpha}$ has an attracting fixed point $x_{\alpha}>0$
whose basin of attraction $V_{\alpha}$ contains $(0,\infty)$.  
Since for every such $\alpha$ the map $f_{\alpha}$ has a repelling fixed point
at $0$, it follows that $0\in\partial V_{\alpha}$ and so again, 
$f_{\alpha}$ is not a geometrically finite map.
Without remarkable differences to the previous case, 
we can construct admissible expansion domains
for every iterate $f^n_{\alpha}$ at any of its repelling or parabolic 
fixed points. 

\section{Proof of the main theorem}
This section is devoted to the proof of our main result; together with 
Corollary \ref{cor_U} and the results from 
\cite{rrrs} it will imply Theorem \ref{thm1} stated at the beginning
of the article (see also Corollary \ref{cor_fin_comp}).

\begin{Theorem}
\label{maintheorem}
Let $f$ be an entire transcendental function, $z_0$ 
a repelling or parabolic fixed point of $f$ and assume that there is  
an admissible expansion domain $U$ of $f$ at $z_0$. If  
for any periodic external address $\underline{s}$ there exists a periodic  
ray of $f$ with address $\underline{s}$, 
then there is a periodic ray of $f$ landing at $z_0$.
\end{Theorem}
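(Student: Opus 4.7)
The plan is to use Theorem \ref{thm_finite} to extract a periodic external address from the $\L$-dynamics on some leg, invoke the hypothesis to obtain a periodic ray with this address, and then use a homotopy-plus-hyperbolic-contraction argument to show this ray lands at $z_0$.

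First I would start with any leg $\gamma_0$ at $z_0$ and apply Theorem \ref{thm_finite} to find $m<n$ with $\L^m(\gamma_0)\sim\L^n(\gamma_0)$. Setting $\gamma:=\L^m(\gamma_0)$ and $p:=n-m$ gives $\L^p(\gamma)\sim\gamma$, and I may replace $\gamma$ by the geodesic representative of its class. The tail of $\gamma$ eventually lies in a single fundamental domain $F_0$ of $f$, so iteratively the tail of $\L^j(\gamma)$ lies in a fundamental domain $F_j$, defining an external address $\underline{s}=F_0F_1F_2\dots$. Because fundamental domains are separated by the curves $f^{-1}(\alpha)$, homotopic legs whose tails stabilize in single fundamental domains must stabilize in the same one; so $\L^p(\gamma)\sim\gamma$ forces $F_p=F_0$, and $\underline{s}$ is $p$-periodic. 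By the hypothesis there exists a periodic ray $g$ with address $\underline{s}$; iterating its functional equation yields $f^p(g(t))=g(\lambda t)$ for some $\lambda>1$ depending on the ratio of $p$ to the period of $g$.

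Next I choose $t_0$ large enough that $f^j(g([t_0,\infty)))\subset F_j$ for every $j\geq 0$, and a curve $\beta_0\subset U\cup\{z_0\}$ from $z_0$ to $g(t_0)$ such that $\tilde\gamma_0:=\beta_0\ast g|_{[t_0,\infty)}$ is a leg homotopic to $\gamma$ in $U$; such $\beta_0$ exists because the tails of both legs lie in $F_0$, and modifying $\beta_0$ by loops in $U$ based at $g(t_0)$ realizes every relevant homotopy class. Setting $\tilde\gamma_n:=\L^{np}(\tilde\gamma_0)$, the functional equation for $g$ gives inductively
\[
\tilde\gamma_n=\beta_n\ast g|_{[t_0/\lambda^n,\infty)},
\]
where $\beta_n$ is the lift of $\beta_{n-1}$ along the branch of $f^{-p}$ fixing $z_0$. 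Since $\L^p$ descends to the identity on $[\gamma]$, every $\tilde\gamma_n$ is homotopic to $\gamma$ in $U$.

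The crux is to show $g(t_0/\lambda^n)\to z_0$. The branch of $f^{-p}$ fixing $z_0$ is an eventual strict contraction of $\rho_U$ on every sufficiently small neighbourhood of $z_0$: immediate in the repelling case, and in the parabolic case from Proposition \ref{lindisk}. The homotopy $\tilde\gamma_n\sim\gamma$ combined with the contraction estimates (Proposition \ref{prop_Y} and Theorem \ref{thm_Y}) used in the proof of Theorem \ref{thm_finite} forces $\beta_n$ eventually to enter such a neighbourhood and thereafter to shrink hyperbolically toward $z_0$. Since $z_0$ is accessible from $U$ by Definition \ref{dfn_adm_exp_dom}$(d)$ and $\rho_U$ blows up near $\partial U$, hyperbolic convergence upgrades to Euclidean convergence $g(t_0/\lambda^n)\to z_0$. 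For arbitrary $t\in(0,t_0]$ with $t_0/\lambda^{n+1}<t\leq t_0/\lambda^n$, the point $g(t)$ lies on the subpiece $g|_{[t_0/\lambda^{n+1},t_0/\lambda^n]}$, which is the $n$-fold $z_0$-fixing pullback of the fixed compact piece $g|_{[t_0/\lambda,t_0]}$; its Euclidean diameter tends to zero by the same contraction, giving $g(t)\to z_0$. The main obstacle is precisely this final pullback-contraction step: one must honestly transfer the hyperbolic contraction of $f^{-p}$ into a Euclidean landing statement for the \emph{entire} initial portion of $g$, and the parabolic case is particularly delicate because the local dynamics is only a weak contraction---one must exploit the horosphere geometry of Proposition \ref{lindisk} together with the finite-sector structure supplied by Definition \ref{dfn_adm_exp_dom}$(d)$ to ensure actual convergence.
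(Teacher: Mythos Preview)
Your strategy coincides with the paper's: use Theorem~\ref{thm_finite} to extract a periodic address, invoke the hypothesis for the corresponding periodic ray, graft its tail onto a leg, and pull back to obtain landing. Two differences are worth noting.

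First, a simplification you miss: since $U$ is also an admissible expansion domain for every iterate $f^n$ (remarked just after Definition~\ref{dfn_adm_exp_dom}), the paper passes to an iterate so that the leg class is \emph{fixed} under $\L$ and the address is constant $\overline{F_0}$. This removes your period-$p$ bookkeeping at no cost.

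Second, your landing argument is over-engineered and contains a misattribution. Proposition~\ref{lindisk} assumes $z_0$ is an \emph{isolated} boundary point of $U$, and as noted immediately after Definition~\ref{dfn_adm_exp_dom} this forces $z_0$ to be repelling; so you cannot cite it in the parabolic case. The paper avoids any local contraction analysis and does not use Proposition~\ref{prop_Y} or Theorem~\ref{thm_Y} at this stage. After replacing the tail of the leg by a tail of the ray past some parameter $\tau$, it simply chooses a point $x_0=\tilde g(\sigma)$ on the leg close to $z_0$; the iterates of $x_0$ under the $z_0$-fixing branch of $f^{-1}$ converge to $z_0$ (in the parabolic case the leg necessarily approaches $z_0$ through a repelling sector, since the attracting petals meet $P(f)\subset\C\setminus U$). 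Then Pick's theorem alone---$f:V\to U$ is a covering with $V\subset U$---gives that $\ell_U\bigl(\L^n(\tilde g|_{(\sigma,\tau)})\bigr)$ is non-increasing, hence bounded. One endpoint of this arc tends to the boundary point $z_0$; bounded hyperbolic length then forces the other endpoint $\L^n(\tilde g(\tau))$, which is a point on the ray at a parameter tending to $0$, to converge to $z_0$ as well. There is no need to control the Euclidean diameter of the whole curve $\beta_n$ or to invoke the compact set $Y$.
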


\begin{proof}
Let us pick a horosphere $H_{\delta}(\infty)$ in the admissible expansion domain $U$.
Recall that the preimage of $H_{\delta}(\infty)$ under $f$ 
is a countable union of tracts, which we will denote by $T_i$. 
Moreover, each tract $T_i$ can be split into fundamental domains, depending 
on the choice of a curve that connects $h_{\delta_n}(\infty)$ to $\infty$ without 
intersecting any of the tracts. Let us fix such a selection of fundamental 
domains $F_i$.  
It is necessary to give an idea of 
how to define an external address, which is respected by 
homotopies, for a leg $\gamma$. 

So let $\gamma$ be any leg. 
Note that $\gamma$ does not even need to intersect a tract. 
On the other hand $\L(\gamma)$ is eventually contained in a tract
and $\L^2(\gamma)$ is eventually contained in a fundamental domain. 
The application of this procedure to any other leg in $[\gamma]$ 
leads to the same fundamental domain.

Let $g$ be a geodesic leg.
By Theorem \ref{thm_finite} the equivalence class $[g]$ is
eventually periodic.
Since $U$ is also an admissible expansion domain of 
every $f^n$, we can assume, by passing to a suitable iterate, that 
$[g]$ is actually fixed.
We can also assume that $g$ is eventually contained 
in a fundamental domain, say $F_0$, and so are its images, 
since by the previous discussion 
this is true for all $\L^n(g)$ with $n\geq 2$.  
Hence we can assign to $g$ the fixed external address
\begin{eqnarray*}
 \underline{s}=\overline{F_0}=F_0 F_0 F_0\dots .
\end{eqnarray*}
By assumption, there exists a periodic ray 
$g_{\underline{s}}:(0,\infty)\rightarrow\C$
with address $\underline{s}$, hence there is a 
constant $\tau(g_{\underline{s}})>0$ 
such that $g_{\underline{s}}(t)\in F_0$ for all $t\geq\tau(g_{\underline{s}})$.
There is also a constant $\tau(g)>0$ so that $g(t)\in F_0$ for all 
$t\geq\tau(g)$. Let $\tau:=\tau(g)+\tau(g_{\underline{s}})$. We homotope
$g$ to a leg $\tilde{g}\in [g]$ by keeping $g\vert_{[0,\tau(g)]}$ fixed,
such that $\tilde{g}(\tau)=g_{\underline{s}}(\tau)$ and $\tilde{g}(t)\in F_0$ 
holds for all $t\geq\tau$. Note that this is always possible since 
every fundamental domain is a simply connected subset of $U$. 

Now, the tails $g\vert_{[\tau,\infty)}$ and 
$g_{\underline{s}}\vert_{[\tau,\infty)}$
are both entirely contained in the same fundamental domain $F_0$, 
hence we can replace $g\vert_{[\tau,\infty)}$ by the ray tail 
$g_{\underline{s}}\vert_{[\tau, \infty)}$, without changing the equivalence class.

When we apply $\L$ to the tails of $g$ and $g_{\underline{s}}$, then 
the resulting curves approach $\infty$ through the same fundamental domain $F_0$, 
and the same holds for the following iterates. 
Hence, after replacing a tail of $g$ by a tail of the dynamic ray and 
applying $\L$ we again obtain a leg eventually contained in $F_0$. 
Now, we want to show that in the limit, the iteration of $\L$ 
on such a leg yields a dynamic ray that lands at $z_0$.

Let $\tilde{g}(\sigma)=:x_0$ be a point on $\tilde{g}$ close to $z_0$. 
The sequence of iterated images of $x_0$ under the leg map 
(more precisely, under the corresponding inverse branch of $f$) 
will converge to the point $z_0$.
On the other hand, it follows from Pick's Theorem that 
the hyperbolic length of $\L^n(\tilde{g}\vert_{(\sigma,\tau)})$ decreases.
Hence the sequence $\L^n(\tilde{g}(\tau))$ also converges to $z_0$, 
which means that $g_{\underline{s}}$ lands at $z_0$.
\end{proof}

\begin{Remark*}
If $z$ is a point which is mapped to some 
fixed point $z_0$ of $f$ at which some periodic ray of $f$ lands, 
then $z$ itself is the landing point of a preperiodic ray of $f$.
\end{Remark*}

\begin{Corollary}
\label{cor_fin_comp}
Let $f=f_1\circ f_2\circ ...\circ f_n$ be a geometrically finite map, 
where $f_1,...,f_n$ belong to class $\B$ and have finite order of growth. 

Then, every repelling or parabolic periodic point of $f$ 
is the landing point of a periodic ray of $f$. 
In particular, every singular value in $\J(f)$ is the landing point of a dynamic ray.
\end{Corollary}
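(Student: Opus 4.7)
The plan is to reduce Corollary \ref{cor_fin_comp} to Theorem \ref{maintheorem} by verifying its two hypotheses for a suitable iterate of $f$. Let $z$ be a repelling or parabolic periodic point of $f$ of period $k$, and set $F:=f^k$, so that $z$ is a fixed point of $F$. Since $f$ is geometrically finite, so is $F$, and Corollary \ref{cor_U} produces an admissible expansion domain $U$ of $F$ at $z$, giving hypothesis (i) of Theorem \ref{maintheorem}.

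For the second hypothesis, observe that $F=(f_1\circ\cdots\circ f_n)^k$ is again a finite composition of class-$\B$ maps of finite order. By the main existence theorem of \cite{rrrs}, every such function has the property that for every periodic external address $\underline{s}$ there is a periodic dynamic ray of $F$ with address $\underline{s}$. Thus Theorem \ref{maintheorem} applies and yields a periodic ray $g$ of $F$ that lands at $z$. Since any periodic ray of $F=f^k$ is automatically a periodic ray of $f$ (its period for $f$ being at most $k$ times its period for $F$), this establishes the first assertion.

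For the ``in particular'' statement, let $v\in S(f)\cap\J(f)$. Since $P_{\J}=P(f)\cap\J(f)$ is finite and forward invariant under $f$, the orbit of $v$ is eventually periodic, and by Proposition \ref{poss_F_comp} the terminal cycle consists of repelling or parabolic periodic points. Let $m\geq 0$ be minimal such that $z_0:=f^m(v)$ is periodic; by the first part, there is a periodic ray of $f$ landing at $z_0$. Pulling this ray back step by step along the finite orbit $v\mapsto f(v)\mapsto\cdots\mapsto z_0$, choosing an appropriate inverse branch of $f$ at each step (using a local branch at a critical value if necessary), produces a (pre)periodic dynamic ray of $f$ landing at $v$. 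This is the content of the remark preceding the corollary, applied iteratively.

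The only genuinely substantive step is the appeal to \cite{rrrs} to guarantee that every periodic external address is realised by a periodic ray; once this is in hand, the rest is a direct assembly of Corollary \ref{cor_U}, Theorem \ref{maintheorem}, and the elementary pull-back argument, with no further contraction or hyperbolic-geometry estimates required.
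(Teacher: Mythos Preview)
Your proof is correct and follows essentially the same approach as the paper: pass to an iterate fixing the periodic point, invoke Corollary \ref{cor_U} for the admissible expansion domain, appeal to the results of \cite{rrrs} (the paper additionally cites \cite[Theorem 2.4]{rempe1}) for the existence of periodic rays at all periodic addresses, apply Theorem \ref{maintheorem}, and then note that periodic rays of $f^k$ are periodic rays of $f$. Your treatment of the ``in particular'' clause is slightly more detailed than the paper's one-line justification, but the content is the same.
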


\begin{proof}
Let $z_0$ be an arbitrary but fixed repelling or parabolic periodic point of $f$
of period $n$. 
Then $z_0$ is a repelling or parabolic fixed point of $f^n$ and by 
Corollary \ref{cor_U} $f^n$ has an admissible expansion domain 
at $z_0$.
Note that $f^n$ also can be written 
as a finite composition of maps in class $\B$ with finite order. 

It follows from \cite[Theorem 2.4]{rempe1} 
and \cite[Proposition 4.5, Theorem 4.2]{rrrs}
that for every periodic external address there is a corresponding 
periodic ray of $f^n$. 
Theorem \ref{maintheorem} finally implies that $z_0$ is the landing 
point of a periodic ray of $f^n$. Since every periodic ray of $f^n$ is also 
a periodic ray of $f$  
and since $z_0$ was an arbitrary 
repelling or parabolic periodic point of $f$, we obtain the first claim.

The second claim follows now immediately, since every singular value in $\J(f)$ 
is eventually mapped onto a repelling or parabolic cycle.
\end{proof}

\begin{Remark*}
The existence of periodic rays for a map $f$ as in Corollary \ref{cor_fin_comp} 
also follows from a combination of results by Baranski \cite[Theorem C]{baranski} 
and Rempe \cite[Theorem $1.1$]{rempe3}. 
\end{Remark*}

\begin{Remark}
Let $f$ be a map as in Corollary \ref{cor_fin_comp}.
We can associate a \emph{dynamical partition}
to $f$, as done for exponential and cosine maps (see 
\cite[Section 4.1]{schleicher1}, \cite[p. 7]{schleicher2}), 
as follows:

Let us assume that all singular values of $f$ belong to $\J(f)$;
the case when $\F(f)\neq\emptyset$ uses similar ideas. 
Recall that $\J(f)=\C$.
By Corollary \ref{cor_fin_comp} every singular value $w_i$ of $f$ is the landing point 
of an eventually periodic ray, say $g_i$. 
Let 
\begin{align*}
\mathcal{D}:=\C\setminus\bigcup_i (g_i\cup w_i).
\end{align*}
Then $\mathcal{D}$ is a simply connected domain and 
$f: f^{-1}(\mathcal{D})\rightarrow\mathcal{D}$ is a covering map. 
We call $f^{-1}(\mathcal{D})$ a  \emph{dynamical partition} of $\C$. 
The components $I_i$ of $\mathcal{D}$ are simply-connected domains, 
called \emph{itinerary domains}, and the restriction 
$f\vert_{I_i}: I_i\to \mathcal{D}$ is a 
conformal map for any $i$. 

Such a partition is very useful for studying dynamics of $f$ in combinatorial terms; 
for instance, since no singular value of $f$ escapes to $\infty$, 
every dynamic ray of $f$ is contained in a unique itinerary domain.
\end{Remark}

\affiliationone{
H. Mihaljevi\'{c}-Brandt\\
Department of Mathematical Sciences,\\
The University of Liverpool,\\
Liverpool, L69 7ZL,\\
England, U.K. 
\email{helenam@liverpool.ac.uk\\}}

\end{document}